\numberwithin{equation}{section}
 \newtheorem{thm}{Theorem}[section]
 \newtheorem{cor}[thm]{Corollary}
 \newtheorem{lem}[thm]{Lemma}
 \theoremstyle{definition}
 \newtheorem{defn}[thm]{Definition}
 \theoremstyle{remark}
 \newtheorem{rem}[thm]{Remark}
 \numberwithin{equation}{section}
\newcommand{\bba}{\mathcal{B}}
\DeclareMathOperator{\Tr}{Tr}
\newcommand{\er}{\mathbb{R}}
\newcommand{\ern}{{\mathbb{R}}^n}
\newcommand{\vel}{L^{p(\cdot)}}
\newcommand{\velp}{L^{p'(\cdot)}}
\newcommand{\velq}{L^{q(\cdot)}}
\newcommand{\bi}{\begin{itemize}}
\newcommand{\ei}{\end{itemize}}
\newcommand{\be}{\begin{enumerate}}
\newcommand{\ee}{\end{enumerate}}
\newcommand{\beq}{\begin{equation}}
\newcommand{\eq}{\end{equation}}
\newcommand{\efet}{{\mathcal{F}}_{\mathbb{T}^n}}
\newcommand{\tn}{\mathbb{T}^n}
\newcommand{\zn}{\mathbb{Z}^n}
\def\Rn{{{\mathbb R}^n}}
\def\Tn{{{\mathbb T}^n}}
\def\Zn{{{\mathbb Z}^n}}
\def\SU2{{{\rm SU(2)}}}
\def\SO3{{{\rm SO(3)}}}
\def\lapsu2{{{\mathcal L}_\SU2}}
\begin{document}

%-------------------------------------------------------------------------
% editorial commands: to be inserted by the editorial office
%
%\firstpage{1} \volume{228} \Copyrightyear{2004} \DOI{003-0001}
%
%
%\seriesextra{Just an add-on}
%\seriesextraline{This is the Concrete Title of this Book\br H.E. R and S.T.C. W, Eds.}
%
% for journals:
%
%\firstpage{1}
%\issuenumber{1}
%\Volumeandyear{1 (2004)}
%\Copyrightyear{2004}
%\DOI{003-xxxx-y}
%\Signet
%\commby{inhouse}
%\submitted{March 14, 2003}
%\received{March 16, 2000}
%\revised{June 1, 2000}
%\accepted{July 22, 2000}
%
%
%
%---------------------------------------------------------------------------
%Insert here the title, affiliations and abstract:
%
\title
[THE BOUNDED APPROXIMATION PROPERTY OF $L^{p(\cdot)}$ AND NUCLEARITY]
 {THE BOUNDED APPROXIMATION PROPERTY OF VARIABLE LEBESGUE SPACES AND NUCLEARITY}

%----------Author 1
\author[Julio Delgado]{Julio Delgado}

%\author[]{Julio Delgado.}

\address{Department of Mathematics\\
Imperial College London\\
180 Queen's Gate, London SW7 2AZ\\
United Kingdom}

\email{j.delgado@imperial.ac.uk}

\thanks{The first author was supported by the
Leverhulme Research Grant RPG-2014-02.
The second author was supported by
 the EPSRC Grant EP/K039407/1. No new data was collected or generated during the coiurse of research.}

%----------Author 2
\author{Michael Ruzhansky}

\address{Department of Mathematics\\
Imperial College London\\
180 Queen's Gate, London SW7 2AZ\\
United Kingdom}

\email{m.ruzhansky@imperial.ac.uk}
%----------classification, keywords, date
\subjclass[2010]{Primary 46B28, 47B10; Secondary 47G10, 47B06}

\keywords{Variable exponent Lebesgue spaces, approximation property, 
nuclearity, trace formulae}

\date{\today}
%----------additions
%\dedicatory{To my boss}
%%% ----------------------------------------------------------------------
\begin{abstract}
In this paper we prove the bounded approximation property for variable exponent Lebesgue spaces, study the concept of 
 nuclearity on such spaces and apply it to trace formulae such as the 
Grothendieck-Lidskii formula. We apply the obtained results to derive criteria for nuclearity and trace formulae
for periodic operators on $\Rn$ in terms of global symbols.
\end{abstract}

%%% ----------------------------------------------------------------------
\maketitle
%%% ----------------------------------------------------------------------
%\tableofcontents
\section{Introduction}
The approximation property on a Banach space arises in the study of the concept of trace and was first introduced in its current shape by Grothendieck in his monumental work  \cite{gro:me}. A particular importance for a Banach space enjoying this  property  is that the trace can be defined and consequently the Fredholm's determinant leading to numerous further developments. Indeed, this problematic finds itself closely related to a wide range of analysis areas:  operator theory, spectral analysis, harmonic analysis, functional analysis, PDEs.

In \cite{ap:enflo} Enflo constructed a counterexample to the approximation property in Banach spaces. A more natural counterexample was then found by Szankowski \cite{ap:sz} who proved that $B(H)$ does not have the approximation property. More recently these properties have been intensively investigated  by Figiel, Johnson, Pelczy\'nski and Szankowski in  
\cite{Figiel-et-al:IJM}, \cite{jsza:happy}. Alberti, Cs\"ornyei, Pelczy\'nski and Preiss 
\cite{Alberti-Csornyei-Pelczynski-Preiss:BV} 
established the bounded approximation property (BAP) for functions of bounded variations, and
Roginskaya and Wojciechowski \cite{Roginstaya:arxiv-2014}
for Sobolev spaces $W^{1,1}.$  The authors have recently established the metric approximation property for mixed-norm $L^{p}$,
 modulation and Wiener amalgam spaces in \cite{drwap2:app}, see also \cite{DRW-J-spectr-theory}. Other works on the bounded approximation property can be found in \cite{lo:bapp1}, \cite{lo:bapp2}. 
 A weak approximation property has been introduced and investigated in \cite{lo:app}.   The fact that the approximation property does not imply the bounded approximation property was proved in \cite{fj:app}. 
 For a historical perspective and an introduction to the subject the reader can be referred to Pietsch's book \cite[Section 5.7.4]{Pietsch:bk-history-2007} and the recent revisited presentation on the Grothendieck's classical work  by Diestel, Fourie and Swart \cite{diestel:gro}. The monograph \cite{ray:tp} contains a more accessible introduction to the topic as well as several examples of spaces enjoying approximation properties. An introductory survey to the concept of trace on Banach spaces appeared in \cite{dr:gro} by Robert. 

To formulate the notions more precisely, let $\mathcal{B}_1, \mathcal{B}_2$ be Banach spaces.
A linear operator $T$ from $\bba_1$ to $\bba_2$ is called {\em nuclear} if there exist sequences
$(x_{n}^\prime)\mbox{ in } \bba_1 '$ and $(y_n) \mbox{ in } \bba_2$ such that
$$
Tx= \sum\limits_{n=1}^{\infty} \left <x,x_{n}'\right>y_n \,\mbox{ and }\,
\sum\limits_{n=1}^{\infty} \|x_{n}'\|_{\bba_1 '}\|y_n\|_{\bba_2} < \infty.
$$
This definition agrees with the concept of a trace class operator in
the setting of Hilbert spaces. The set of nuclear operators from $\bba_1$ into $\bba_2$ forms the ideal of nuclear operators $\mathcal{N}(\bba_1, \bba_2)$ endowed with the norm
\beq\nonumber 
N(T)=\inf\{\sum\limits_{n=1}^{\infty} \|x_{n}'\|_{\bba_1 '}\|y_n\|_{\bba_2} : 
T=\sum\limits_{n=1}^{\infty} x_{n}'\otimes y_n \}.\eq
If $\bba=\bba_1=\bba_2$, it is natural to attempt to define the trace of $T\in\mathcal{N}(\bba)$ by
\begin{equation}\label{EQ:Trace}
\Tr (T):=\sum\limits_{n=1}^{\infty}x_{n}'(y_n),
\end{equation}
where $T=\sum\limits_{n=1}^{\infty}x_{n}'\otimes y_n$ is a
representation of $T$. Grothendieck \cite{gro:me} proved that the trace $\Tr(T)$ is well defined for 
 all nuclear operators $T\in\mathcal{N}(\bba)$ if and only if the Banach space
 $\bba$ has the {\em approximation property}
  (see also Pietsch \cite{piet:book} or Defant and Floret \cite{df:tensor}),  which means that for every compact
   set $K$ in $\bba$ and for every $\epsilon >0$ there exists $F\in \mathcal{F}(\bba) $ such that
\[\|x-Fx\|<\epsilon\quad \textrm{ for all } x\in K,\]
where we have denoted by $\mathcal{F}(\bba)$ the space of all finite rank bounded linear operators
 on $\bba$. We denote by $\mathcal{L}(\bba)$ the Banach algebra of bounded linear operators on $\bba$. 
 
There are more related approximation properties, e.g. if in the definition above
 the operator $F$ satisfies $\|F\|\leq M$ for a fixed $M>0$ one says that $\bba$ possesses the 
 {\em bounded approximation property}. In the case $M=1$ one says that $\bba$ has the 
 {\em metric approximation property}. 
 The fact that the classical spaces $C(X)$, where $X$ is a compact topological space and $L^p(\mu)$ for $1\leq p<\infty$  satisfy the metric approximation property can be found in \cite{piet:book1}.

As we know from Lidskii \cite{li:formula},
in Hilbert spaces the operator trace is equal to the
sum of the eigenvalues of the operator counted with multiplicities. This property is nowadays called the
Lidskii formula.
An important feature on Banach  spaces even endowed with the approximation property is that the Lidskii formula does not hold in general for nuclear operators. 
Thus, in the setting of Banach spaces,
Grothendieck \cite{gro:me}
introduced a more restricted class of operators where Lidskii formula holds, 
this fact motivating the following definition.

Let $\bba_1, \bba_2$ be Banach spaces and let $0<r\leq 1$. A linear operator $T$
from $\bba_1$ into $\bba_2$ is called {\em r-nuclear} if there exist sequences
$(x_{n}^{\prime})\mbox{ in } \bba_1' $ and $(y_n) \mbox{ in } \bba_2$ so that
\beq 
Tx= \sum\limits_{n=1}^{\infty} \left <x,x_{n}'\right>y_n \,\mbox{ and }\,
\sum\limits_{n=1}^{\infty} \|x_{n}'\|^{r}_{\bba_1'}\|y_n\|^{r}_{\bba_2} < \infty.\label{rn}
\eq
We associate a quasi-norm $n_r(T)$ by
\[
n_r(T)^r:=\inf\{\sum\limits_{n=1}^{\infty} \|x_{n}'\|^{r}_{\bba_1'}\|y_n\|^{r}_{\bba_2}\},
\]
where the infimum is taken over the  representations of $T$ as in \eqref{rn}. 
When $r=1$ the $1$-nuclear operators agree with 
the nuclear operators, and as already mentioned,
in that case this definition also agrees with the concept of trace class operators
 in the setting of Hilbert spaces ($\bba_1=\bba_2=H$). More generally, Oloff proved in \cite{Oloff:pnorm} that the class of $r$-nuclear
operators coincides with the Schatten class $S_{r}(H)$ when $\bba_1=\bba_2=H$ is a Hilbert space and 
$0<r\leq 1$. Moreover, Oloff proved that 
\beq\label{olo1}\|T\|_{S_r}=n_r(T),\eq
where $\|\cdot\|_{S_r}$ denotes the classical Schatten quasi-norms in terms of singular values.

In \cite{gro:me} Grothendieck proved that if $T$ is $\frac 23$-nuclear from $\bba$ into $\bba$ for a Banach space $\bba$, then
\beq\Tr(T)=\sum\limits_{j=1}^{\infty}\lambda_j,\label{lia1}\eq
where $\lambda_j\,\, (j=1,2,\dots)$ are the eigenvalues of $T$ with multiplicities taken into account,
and $\Tr(T)$ is as in \eqref{EQ:Trace}.
 Grothendieck also established its applications to the distribution of eigenvalues of operators
in Banach spaces. We refer to \cite{dr13a:nuclp} for several conclusions 
in the setting of compact Lie groups
concerning
summability and distribution of eigenvalues of operators on $L^{p}$-spaces once
we have information on their $r$-nuclearity. See also \cite{drtbvp:p1} for applications of the notion of nuclearity to boundary value problems. Kernel conditions on compact manifolds have been investigated in 
\cite{dr:suffkernel}, \cite{dr:sdk}.

\medskip

On the other hand, the variable exponent Lebesgue spaces are a generalisation of the classical Lebesgue spaces, replacing the constant exponent $p$ by a variable exponent function $p(x)$. Variable exponent Lebesgue spaces were introduced by Orlicz \cite{or:mod} in 1931 and some properties were further developed by Nakano in the 1950s \cite{nak:a1}, \cite{nak:a2} within the more general framework of modular spaces. Subsequently developments of modular spaces were carried out in the 1970s and 1980s by Hudzik, Musielak, Portnov \cite{hu:so1}, \cite{hu:so2}, \cite{hu:so3}, \cite{mus:or}, \cite{por:or}.

A more specific study of variable Lebesgue spaces only appears in 1961 with the work of Tsenov \cite{tse:ve} who independently discovered those spaces and later in the works of Sharapudinov \cite{sha:ve1},  \cite{sha:ve2},  \cite{sha:ve3},  \cite{sha:ve4} and Zhikov \cite{zh:pc}, \cite{zh:pl}, \cite{zh:os}.  Further, the development of the analysis of many problems on those spaces has  
  been of great interest in the last decades as has been exhibited in the recent book
  \cite{di:book}, \cite{lpv:cuf}, \cite{lpv:cufrw} and the literature therein. 

We now briefly recall the definition of variable exponent Lebesgue spaces and we refer the reader to \cite{di:book} and \cite{lpv:cuf} for the basic properties of such spaces. Let $(\Omega,\mathcal{M},\mu)$ be a $\sigma$-finite, complete measure space. We
define $\mathcal{P}(\Omega,\mu)$ to be the set of all $\mu$-measurable functions $p: \Omega\rightarrow [1,\infty].$ The functions in
$\mathcal{P}(\Omega,\mu)$ are called variable exponents on $\Omega$. We define 
$$
p^+=p_{\Omega}^+:={\rm ess}\sup_{x\in \Omega}p(x), \quad p^-=p_{\Omega}^-:={\rm ess}\inf_{x\in\Omega}p(x).
$$
If $p^+<\infty$, then $p$ is called a {\em bounded variable exponent}.
 If $f:\Omega\rightarrow\er$ is a measurable function we define the {\em modular} associated with $p=p(\cdot)$ by 
\[\rho_{p(\cdot)}(f):=\int\limits_{\Omega}|f(x)|^{p(x)}d\mu(x),
\]
and
\[\|f\|_{\vel(\mu)}:=\inf\{\lambda >0:\rho_{p(\cdot)}(f/\lambda)\leq 1\}.\]
The resulting spaces $L^{p(\cdot)}(\mu)$ of measurable functions such that $\|f\|_{\vel(\mu)}<\infty$ are Banach spaces and enjoy many properties similar to the classical Lebesgue $L^p$ spaces. 
For example, we will often make use of the following modification of H\"older inequality which becomes affected by factor 2: let
$p,q,s\in \mathcal{P}(\Omega,\mu)$ be such that
$$
\frac{1}{s(x)}=\frac{1}{p(x)}+\frac{1}{q(x)}
$$
holds for $\mu$-almost every $x\in\Omega$. Then we have
\begin{equation}\label{EQ:Holder}
\|fg\|_{L^{s(\cdot)}(\mu)}\leq  2\, \|f\|_{L^{p(\cdot)}(\mu)} \|g\|_{L^{q(\cdot)}(\mu)}.
\end{equation}
We refer to \cite[Lemma 3.2.20]{di:book} for a more detailed statement.

At the same time, there are some exceptions and differences to the classical theory, for instance the Young inequality fails in the variable exponent case, a fact proved in 1991 by Kov{\'a}{\v c}ik and R{\'a}kosn{\'i}k (cf. \cite{kr:che}) and essentially due to the loss of boundedness of translation operators on  $L^{p(\cdot)}$ spaces (see also \cite{lpv:cuf}, Theorem 5.19). If the variable exponent $p(\cdot)$ is bounded the space $L^{p(\cdot)}(\mu)$ is separable and if we denote by $p'(\cdot)$ the variable exponent defined pointwise by
\[
\frac{1}{p(x)}+\frac{1}{p'(x)}=1,
\]
then $(L^{p(\cdot)}(\mu))'=L^{p'(\cdot)}(\mu)$, where the identity refers to the associate space and not necessarily to the isometric dual space. Moreover, if $1<p^-\leq p^+<\infty$ the space $L^{p(\cdot)}(\mu)$ is reflexive.  For the study of the approximation property we will restrict to consider bounded variable exponents due to the density of the simple functions in $L^{p(\cdot)}$ in that case.

In this work we are going to establish the bounded approximation property for variable exponent Lebesgue spaces, study the concept of nuclearity
 on such spaces and apply it to trace formulae such as the 
Grothendieck-Lidskii formula and the analysis of pseudo-differential operators on the torus. 
 
\section{Bounded approximation property for variable exponent Lebesgue spaces}

In  this section we will prove that the  variable exponent spaces $L^{p(\cdot)}(\mu)$ satisfy the bounded approximation property.
\medskip

In the rest of this section we will assume that our measure space $(\Omega,\mathcal{M},\mu)$ is $\sigma$-finite and complete. 
We will also assume that the exponent $p(\cdot)$ is bounded since only in such case the simple functions are dense in $L^{p(\cdot)}(\mu)$ (cf. \cite{di:book}, Corollary 3.4.10). 

We shall now formulate some preparatory lemmata useful for the proof of the bounded  approximation property. Let $I$ be a countable set of indices endowed with the counting measure $\nu$. For $p\in\mathcal{P}(I,\nu)$, we will denote by $\ell^{p(\cdot)}(I)$ or simply by 
$\ell^{p(\cdot)}$ the  corresponding variable exponent Lebesgue space whose norm is given by
\[
\|h\|_{\ell^{p(\cdot)}}=\inf\{\lambda >0:\sum\limits_{k\in I}\left|\frac{h_k}{\lambda}\right|^{p_k}\leq 1\}.
\]
Given a Banach space $\bba$ and $u\in\bba, z\in\bba '$, we will also denote by $\langle u,z\rangle _{\bba, \bba'}$, or simply 
by $\langle u,z\rangle $, the valuation $z(u)$. 
\begin{lem}\label{simp1} Let $\bba$ be a Banach space and $q\in\mathcal{P}(I,\nu)$. Let $(u_i)_{i\in I}, (v_i)_{i\in I}$ be sequences in $\bba ', \bba$ respectively such that
\[\|\langle x,u_i\rangle\|_{\ell^{q(\cdot)}}, \|\langle v_i,z\rangle\|_{\ell^{q'(\cdot)}}\leq 1,\,\,{\mbox { for }} \|x\|_{\bba}, \|z\|_{\bba '}\leq 1.\]
Then the operator $T=\sum\limits_{i\in I}u_i\otimes v_i$ from $\bba$ into $\bba$ is well defined, bounded and satisfies
$\|T\|_{\mathcal{L}(\bba)}\leq 2$.
\end{lem}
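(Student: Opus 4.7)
The plan is to reduce $\|Tx\|_\bba$ to a scalar pairing by duality, and then apply the variable-exponent H\"older inequality \eqref{EQ:Holder} on the index space $(I,\nu)$ with the conjugate pair $q(\cdot)$, $q'(\cdot)$ (so that the exponent $s$ in \eqref{EQ:Holder} equals~$1$). The factor $2$ in the conclusion $\|T\|_{\mathcal L(\bba)}\leq 2$ then comes verbatim from the factor $2$ in \eqref{EQ:Holder}.

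Concretely, I would fix $x\in\bba$ and $z\in\bba'$ with $\|x\|_\bba\leq 1$ and $\|z\|_{\bba'}\leq 1$, and form the scalar sequences $a_i:=\langle x,u_i\rangle$ and $b_i:=\langle v_i,z\rangle$. By linearity of the pairings, the hypothesis of the lemma yields $\|a\|_{\ell^{q(\cdot)}}\leq 1$ and $\|b\|_{\ell^{q'(\cdot)}}\leq 1$. Since $\frac{1}{q(i)}+\frac{1}{q'(i)}=1$, the variable-exponent H\"older inequality \eqref{EQ:Holder} applied on $(I,\nu)$ with $s\equiv 1$ gives
\[
\sum_{i\in I}\bigl|\langle x,u_i\rangle\,\langle v_i,z\rangle\bigr|
\;=\;\|ab\|_{\ell^{1}}
\;\leq\; 2\,\|a\|_{\ell^{q(\cdot)}}\|b\|_{\ell^{q'(\cdot)}}
\;\leq\; 2.
\]
Consequently, for every finite $F\subset I$, the partial sum $T_Fx:=\sum_{i\in F}\langle x,u_i\rangle v_i$ lies in $\bba$ and satisfies, by duality combined with the absolute bound above,
\[
\|T_Fx\|_\bba=\sup_{\|z\|_{\bba'}\leq 1}|\langle T_Fx,z\rangle|\leq 2\|x\|_\bba,
\]
uniformly in~$F$.

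To upgrade this to a norm-convergent series, I would show that the net $(T_Fx)_F$ is Cauchy in $\bba$. For finite $F\subset G$, applying the same H\"older estimate with the sequences restricted to $G\setminus F$ yields
\[
\|T_Gx-T_Fx\|_\bba \;\leq\; 2\,\|\chi_{G\setminus F}\,a\|_{\ell^{q(\cdot)}}.
\]
Under the standing assumption of this section that the variable exponent is bounded, the modular and the Luxemburg norm on $\ell^{q(\cdot)}$ are equivalent on bounded sets, so finitely supported sequences are dense in $\ell^{q(\cdot)}$ and the tail $\|\chi_{F^c}\,a\|_{\ell^{q(\cdot)}}$ tends to~$0$ as $F$ exhausts~$I$. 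Completeness of $\bba$ then produces $Tx:=\lim_F T_Fx\in\bba$ with $\|Tx\|_\bba\leq 2\|x\|_\bba$, proving $T\in\mathcal L(\bba)$ with $\|T\|_{\mathcal L(\bba)}\leq 2$.

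The only real subtlety is the convergence step: the uniform bound $\|T_F\|\leq 2$ is a one-line consequence of H\"older plus duality, but passing from weak pairings to a norm-defined operator on $\bba$ rests on the tail-vanishing property in $\ell^{q(\cdot)}$, which in turn rests on the boundedness of the exponent.
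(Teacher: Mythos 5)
Your proof is correct, and its core estimate is exactly the paper's: bound $|\langle T_Fx,z\rangle|$ by the $\ell^1$-pairing of $(\langle x,u_i\rangle)$ and $(\langle v_i,z\rangle)$ and apply the variable-exponent H\"older inequality \eqref{EQ:Holder} with $s\equiv 1$, the constant $2$ coming verbatim from there. Where you genuinely diverge is the limit step. The paper simply asserts from the uniform bound $|\langle T_Nx,z\rangle|\leq 2$ that ``$T=\lim_N T_N$ exists in $\mathcal{L}(\bba)$''; a uniform bound on a net of partial sums does not by itself give convergence, and operator-norm convergence can in fact fail under the stated hypotheses (take $\bba=\ell^2$, $u_i=v_i=e_i$, $q\equiv 2$: the hypotheses hold, $T$ is the identity, but $\|T-T_N\|=1$ for every finite $N$), so the correct reading of the conclusion is strong convergence, which is what you prove. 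Your Cauchy-net argument via the tail estimate $\|T_Gx-T_Fx\|_{\bba}\leq 2\|\chi_{G\setminus F}(\langle x,u_i\rangle)\|_{\ell^{q(\cdot)}}$ supplies the missing justification, at the price of using $q^+<\infty$ for the tail-vanishing in $\ell^{q(\cdot)}$; note that this boundedness is not literally among the lemma's hypotheses (the section's standing assumption concerns $p(\cdot)$ on $\Omega$, not the auxiliary exponent $q$ on $I$), so you should state it explicitly if you keep the general index set. In the paper's actual application (the proof of Theorem \ref{app12}) the families $u_k,v_k$ are finite, so the convergence issue is vacuous there; your version buys a rigorous statement for infinite $I$ with bounded exponent, which the paper's one-line limit claim does not.
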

\begin{proof} Let $N\subset I$ be a finite subset of $I$. Let us write $T_N:=\sum\limits_{i\in N}u_i\otimes v_i$.
 It is clear that $T_N$ is well defined. Moreover $T_N$ is a bounded finite rank operator. Now, since 
$T_Nx=\sum\limits_{i\in N}\langle x,u_i\rangle v_i$, we observe that for $x\in\bba, z\in\bba '$ such that 
$\|x\|_{\bba}, \|z\|_{\bba '}\leq 1$, applying the H\"older inequality \eqref{EQ:Holder} for variable exponent spaces
we obtain  
%\begin{align*} 
$$
|\langle T_Nx,z\rangle|\leq \sum\limits_{i\in N}|\langle x,u_i\rangle||\langle v_i,z\rangle|
 \leq 2\|\langle x,u_i\rangle\|_{\ell^{q(\cdot)}}\|\langle v_i,z\rangle\|_{\ell^{q'(\cdot)}}
\leq  2.
$$
%\end{align*}
 Therefore $T=\lim\limits_NT_N$ exists in $\mathcal{L}(\bba)$ and $\|T\|_{\mathcal{L}(\bba)}\leq 2$.
\end{proof}

\begin{lem}\label{simp2} Let $\bba_1, \bba_2$ be Banach spaces and $(L_i)_i$ a net contained in $\mathcal{L}(\bba_1,\bba_2)$ such that
 for every $x\in\bba_1$, $\lim\limits_{i}L_ix=Lx$ for some $Lx\in\bba_2$. Then
$L\in\mathcal{L}(\bba_1,\bba_2)$, $\|L_i\|_{\mathcal{L}(\bba_1,\bba_2)}\leq M$ for some $M>0$ and $L_i$ converge to $L$ in the topology of
 uniform convergence on compact sets.
\end{lem}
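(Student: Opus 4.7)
The plan is that Lemma~\ref{simp2} is essentially the Banach--Steinhaus theorem plus the standard epsilon-over-three argument on totally bounded sets, so I would organize the proof in three steps.

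\smallskip

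\emph{Step 1: $L$ is linear.} For $x,y\in\bba_1$ and scalars $\alpha,\beta$, each $L_i$ is linear, and continuity of the vector space operations in $\bba_2$ lets me pass $\lim_i$ through, giving
$L(\alpha x+\beta y)=\lim_i L_i(\alpha x+\beta y)=\alpha Lx+\beta Ly.$

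\smallskip

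\emph{Step 2: uniform boundedness.} For each fixed $x\in\bba_1$, the convergence $L_ix\to Lx$ in $\bba_2$ shows that the orbit $\{L_ix\}_i$ is bounded (passing, if necessary, to a cofinal tail of the net on which $L_ix$ lies in $\overline{B}(Lx,1)$). Thus the family $\{L_i\}_i$ is pointwise bounded, and the uniform boundedness principle yields
$M:=\sup_i\|L_i\|_{\mathcal{L}(\bba_1,\bba_2)}<\infty.$
Taking the limit in $\|L_ix\|_{\bba_2}\le M\|x\|_{\bba_1}$ then gives $\|Lx\|_{\bba_2}\le M\|x\|_{\bba_1}$, so $L\in\mathcal{L}(\bba_1,\bba_2)$.

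\smallskip

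\emph{Step 3: uniform convergence on compacts.} Fix a compact set $K\subset \bba_1$ and $\varepsilon>0$. Since $K$ is totally bounded, I choose a finite $\tfrac{\varepsilon}{4M}$-net $\{x_1,\dots,x_N\}\subset K$. Pointwise convergence gives an index $i_0$ so that
$\|L_ix_k-Lx_k\|_{\bba_2}<\tfrac{\varepsilon}{2}$ for every $i\ge i_0$ and every $k=1,\dots,N$. For arbitrary $x\in K$, pick $x_k$ with $\|x-x_k\|_{\bba_1}<\tfrac{\varepsilon}{4M}$; then, since both $L_i$ and $L$ are bounded by $M$, the triangle inequality gives
\[
\|L_ix-Lx\|_{\bba_2}\le \|L_i(x-x_k)\|+\|L_ix_k-Lx_k\|+\|L(x-x_k)\|\le 2M\cdot\tfrac{\varepsilon}{4M}+\tfrac{\varepsilon}{2}=\varepsilon,
\]
uniformly in $x\in K$, which is exactly uniform convergence on $K$.

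\smallskip

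There is no serious obstacle: the only delicate point is justifying Banach--Steinhaus for a net rather than a sequence in Step~2, but this is handled simply by passing to the cofinal tail on which every pointwise orbit is bounded; the Baire-category proof of the uniform boundedness principle then applies verbatim to the pointwise-bounded family of bounded linear operators thus obtained.
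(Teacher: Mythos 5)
Your Steps 1 and 3 coincide with the paper's own argument: the paper likewise obtains the bound from the uniform boundedness principle and then runs the same covering argument on the compact set $K$ (a finite $\tfrac{\epsilon}{3M}$-net plus the triangle inequality, where you use $\tfrac{\varepsilon}{4M}$), so in outline the proposal matches the paper's proof.

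The genuine problem is in Step 2, exactly at the point you flag as delicate. For a net, pointwise convergence does not yield pointwise boundedness, and your proposed repair --- passing to a cofinal tail on which $L_i x\in\overline{B}(Lx,1)$ --- does not close the gap, because that tail depends on $x$: there is no single cofinal subfamily of $(L_i)_i$ that is pointwise bounded at every $x$ simultaneously, so Banach--Steinhaus cannot be applied ``to the family thus obtained''. Indeed, the uniform bound can genuinely fail for nets: index by the finite subsets $F\subset\mathbb{N}$ ordered by inclusion and put $L_F=|F|\,I$ if $1\notin F$ and $L_F=I$ if $1\in F$; then $L_Fx\to x$ for every $x$, yet $\sup_F\|L_F\|_{\mathcal{L}(\bba_1,\bba_1)}=\infty$. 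So the conclusion $\|L_i\|\leq M$ requires either restricting to sequences or adding boundedness as a hypothesis. In fairness, the paper's proof is equally terse here (it simply cites the uniform boundedness principle), and nothing downstream is affected: in the applications the uniform bound is either assumed outright (Corollary \ref{simp3}) or verified by hand ($\|L_{\mathfrak{P}}\|_{\mathcal{L}(\vel)}\leq 2$ in Theorem \ref{app12}), and the compact-convergence statement --- the only part that is actually used --- depends only on the bound $M$ and is proved correctly in your Step 3.
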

\begin{proof} The fact that $\|L_i\|_{\mathcal{L}(\bba_1,\bba_2)}\leq M$ and $L\in\mathcal{L}(\bba_1,\bba_2)$ follows from the 
uniform boundedness principle. For the rest, let $K\subset\bba_1$ be compact, $\epsilon >0$ and $M\geq 1$ such that $\|L_i\|_{\mathcal{L}(\bba_1,\bba_2)}\leq M$. Let $\{x_1,\dots,x_n\}\subset K$ be such that $K\subset \bigcup\limits_{j=1}^nB(x_j,\frac{\epsilon}{3M})$. If $i$ is large enough
we have $\|Lx_j-L_ix_j\|_{\bba_2}<\frac{\epsilon}{3M}$ for all $1\leq j\leq n$. Let $x\in K$ and we pick $j_0$ such that 
$\|x-x_{j_0}\|_{\bba_1}<\frac{\epsilon}{3M}$. Then
\[\|Lx-L_ix\|_{\bba_2}\leq\|Lx-Lx_{j_0}\|_{\bba_2}+\|Lx_{j_0}-L_ix_{j_0}\|_{\bba_2}+\|L_ix_{j_0}-L_ix\|_{\bba_2}<\epsilon .\]
Therefore $L_i$ converge to $L$ uniformly on compact sets.
\end{proof}

As a consequence we obtain:
\begin{cor}\label{simp3} Let $\bba$ be a Banach space. If there is a net $(L_i)_i$ contained in $\mathcal{F}(\bba)$ such that
 $\sup\limits_{i}\|L_i\|\leq M<\infty$ and $\lim\limits_{i}L_ix=x$ for every $x\in\bba$, then
$\bba$ has the bounded approximation property with constant $M$.
\end{cor}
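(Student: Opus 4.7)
The plan is to deduce this corollary directly from Lemma \ref{simp2} by specialising $\bba_1=\bba_2=\bba$ and taking the target operator to be the identity. The hypothesis $\lim_i L_i x = x$ for every $x\in\bba$ is exactly the pointwise convergence statement of Lemma \ref{simp2} with $L = \mathrm{Id}_{\bba}$, and the assumption $\sup_i \|L_i\|\leq M$ already provides the uniform bound that the lemma would otherwise produce via the uniform boundedness principle. Thus Lemma \ref{simp2} applies and yields that $L_i \to \mathrm{Id}_{\bba}$ in the topology of uniform convergence on compact subsets of $\bba$.

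From there the conclusion is a direct unwinding of definitions. Fix a compact set $K\subset\bba$ and $\epsilon >0$. By the uniform convergence on $K$ just obtained, there exists an index $i_0$ such that $\|x - L_{i_0} x\|_{\bba} < \epsilon$ for every $x\in K$. Setting $F := L_{i_0}$, we have $F\in\mathcal{F}(\bba)$ by hypothesis, $\|F\|_{\mathcal{L}(\bba)}\leq M$ by the uniform bound, and $\|x - Fx\|_{\bba}<\epsilon$ on $K$. This is precisely the definition of the bounded approximation property with constant $M$, so there is essentially no obstacle: the work has already been absorbed into Lemma \ref{simp2}, and the corollary amounts only to recognising that its conclusion is the BAP.
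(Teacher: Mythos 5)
Your proposal is correct and follows exactly the route the paper intends: the corollary is stated as an immediate consequence of Lemma \ref{simp2}, applied with $\bba_1=\bba_2=\bba$ and $L=\mathrm{Id}_{\bba}$, after which uniform convergence on compact sets plus the bound $\|L_i\|\leq M$ gives the bounded approximation property with constant $M$ by definition. Nothing is missing.
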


We can now prove the main result of this section:

\begin{thm} \label{app12} Let $p\in\mathcal{P}(\Omega,\mu)$ be a bounded variable exponent. Then, the variable exponent Lebesgue space $L^{p(\cdot)}(\mu)$ has the bounded approximation property.
\end{thm}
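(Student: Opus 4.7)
By Corollary \ref{simp3}, it suffices to produce a net $(L_n)_n$ of finite-rank operators on $L^{p(\cdot)}(\mu)$ with $\sup_n \|L_n\|_{\mathcal L(L^{p(\cdot)})}\le M$ and $L_n f\to f$ for every $f$. Using the $\sigma$-finiteness of $\mu$, I would fix an exhaustion $\Omega_1\subset\Omega_2\subset\cdots$ of $\Omega$ by sets of finite measure. For each $n$, I would build a finite measurable partition $\mathcal A_n=\{A_1^{(n)},\ldots,A_{N_n}^{(n)}\}$ of $\Omega_n$ into atoms of positive measure, nested so that $\mathcal A_{n+1}$ refines $\mathcal A_n$, fine enough that each element of a fixed countable dense subset of simple functions in $L^{p(\cdot)}$ is eventually constant on the atoms, and refined so that the oscillation of $p$ on each atom tends to $0$ as $n\to\infty$. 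The last property can be arranged by preliminarily partitioning the range $[1,p^+]$ into short subintervals and intersecting their pre-images with the $\Omega_n$'s; the density-capture is possible because $L^{p(\cdot)}$ is separable for $p^+<\infty$.

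For each $n$, set
$$
L_n f:=\sum_{j=1}^{N_n}\langle f,u_j^{(n)}\rangle\,v_j^{(n)},\qquad
u_j^{(n)}:=\frac{\chi_{A_j^{(n)}}}{\|\chi_{A_j^{(n)}}\|_{L^{p'(\cdot)}}},\qquad
v_j^{(n)}:=\frac{\chi_{A_j^{(n)}}}{\|\chi_{A_j^{(n)}}\|_{L^{p(\cdot)}}},
$$
so that $u_j^{(n)}$ is a unit vector in $L^{p'(\cdot)}=(L^{p(\cdot)})'$, $v_j^{(n)}$ a unit vector in $L^{p(\cdot)}$, and $L_n$ has finite rank. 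To apply Lemma \ref{simp1} with bound $2$, I would choose a variable exponent $q_n\in\mathcal P(I_n,\nu)$ on the index set $I_n=\{1,\ldots,N_n\}$ (with counting measure) by picking a representative value of $p$ on each atom, for instance $q_n(j):=\operatorname{ess\,inf}_{A_j^{(n)}}p$. Verifying the two dual estimates
$$
\bigl\|(\langle f,u_j^{(n)}\rangle)_j\bigr\|_{\ell^{q_n(\cdot)}}\le 1\text{ for }\|f\|_{L^{p(\cdot)}}\le 1,\quad
\bigl\|(\langle v_j^{(n)},g\rangle)_j\bigr\|_{\ell^{q_n'(\cdot)}}\le 1\text{ for }\|g\|_{L^{p'(\cdot)}}\le 1,
$$
is then accomplished by the variable-exponent H\"older inequality \eqref{EQ:Holder}, which yields $|\langle f,u_j^{(n)}\rangle|\le 2\|f\chi_{A_j^{(n)}}\|_{L^{p(\cdot)}}$, followed by comparing the localized norms $(\|f\chi_{A_j^{(n)}}\|_{L^{p(\cdot)}})_j$ with their $\ell^{q_n(\cdot)}$-norm via the disjoint-support identity $\rho_{p(\cdot)}(f)=\sum_j\rho_{p(\cdot)}(f\chi_{A_j^{(n)}})$ for the modular, using that on an atom where $p$ is almost constant the local $L^{p(\cdot)}$-norm behaves, up to controlled error, like the classical $L^{q_n(j)}$-norm. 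The dual estimate is symmetric.

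Once $\sup_n\|L_n\|\le 2$ is secured, pointwise convergence is routine. For a simple function $s=\sum_{k=1}^Ka_k\chi_{B_k}$ supported in some $\Omega_{n_0}$, and for $n\ge n_0$ with $\mathcal A_n$ refining $\{B_1,\ldots,B_K\}$, a direct computation yields
$$
L_n s=\sum_j r_j^{(n)}(s|_{A_j^{(n)}})\,\chi_{A_j^{(n)}},\qquad
r_j^{(n)}:=\frac{\mu(A_j^{(n)})}{\|\chi_{A_j^{(n)}}\|_{L^{p(\cdot)}}\|\chi_{A_j^{(n)}}\|_{L^{p'(\cdot)}}},
$$
with $r_j^{(n)}\to 1$ as the oscillation of $p$ on $A_j^{(n)}$ shrinks, since $\|\chi_A\|_q\|\chi_A\|_{q'}=\mu(A)$ whenever $q$ is a constant exponent. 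Hence $L_n s\to s$ in $L^{p(\cdot)}$. Density of simple functions \cite[Corollary 3.4.10]{di:book} (valid because $p^+<\infty$), combined with the uniform bound and a standard $3\epsilon$-argument, upgrades this to $L_n f\to f$ for every $f\in L^{p(\cdot)}$, and Corollary \ref{simp3} delivers the bounded approximation property with constant $M=2$.

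The main obstacle is the uniform norm bound. Unlike the classical $L^p$ case, averaging (conditional-expectation) operators on $L^{p(\cdot)}$ are generally unbounded without regularity hypotheses on $p$ (such as log-H\"older continuity), so $L_n$ cannot be viewed as a contractive projection. Boundedness has to be extracted symmetrically from the normalized pairs $(u_j^{(n)},v_j^{(n)})$ through Lemma \ref{simp1}, at the cost of a factor of $2$; the delicate bookkeeping lies in balancing the choice of the index-set exponent $q_n$ against the oscillation of $p$ over each atom, so that the two dual $\ell^{q_n(\cdot)}$-estimates close up without any further regularity assumption on $p(\cdot)$.
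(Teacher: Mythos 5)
Your overall architecture is the same as the paper's (finite-rank operators $\sum_j u_j\otimes v_j$ built from characteristic functions of a partition, the norm bound via Lemma \ref{simp1}, pointwise convergence on simple functions, and Corollary \ref{simp3}), but you diverge at the decisive point: the paper arranges matters so that the exponent is \emph{constant} on the atoms (it approximates $p$ by simple exponents and takes partitions subordinate to them), which makes every modular computation in the verification of Lemma \ref{simp1} an exact identity, whereas you keep the true exponent and hope to absorb the mismatch by small oscillation. That is where your argument has a genuine gap: the two $\ell^{q_n(\cdot)}$-estimates required by Lemma \ref{simp1} are asserted, not proved, and with your choice $q_n(j)=\operatorname{ess\,inf}_{A_j^{(n)}}p$ the relevant norm--modular comparison runs the wrong way. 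For $\|f\|_{L^{p(\cdot)}}\le1$ set $a_j:=\|f\chi_{A_j^{(n)}}\|_{L^{p(\cdot)}}\le1$ and let $p_j^{\pm}$ be the essential infimum/supremum of $p$ on $A_j^{(n)}$; the standard inequalities give $a_j^{p_j^+}\le\rho_{p(\cdot)}(f\chi_{A_j^{(n)}})\le a_j^{p_j^-}$, so the disjoint-support identity bounds $\sum_j a_j^{p_j^-}$ from \emph{below}, not above, by $\rho_{p(\cdot)}(f)\le1$. In fact, for a fixed partition with $N$ atoms of oscillation $\delta$, spreading the modular mass evenly over pieces where $p=p_j^+$ yields $\sum_j a_j^{p_j^-}$ of order $N^{c\delta}$, so for fixed $n$ there is no bound uniform over the unit ball unless the oscillation is tied to the number of atoms. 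Choosing instead $q_n(j)=p_j^+$ repairs the primal estimate but breaks the dual one in exactly the same way, since $\operatorname{ess\,sup}_{A_j^{(n)}}p'=(p_j^-)'$; no per-atom constant exponent closes both sides, and the unavoidable loss (of size about $N_n^{c\,\delta_n}$, via H\"older's inequality in $j$) must be beaten by choosing the oscillation threshold \emph{after} the atoms forced by the exhaustion and the dense family are fixed. This bookkeeping, which you yourself flag as the delicate point, is precisely what is missing, and without it the uniform bound --- the heart of the theorem --- is not established.

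Two further steps are quantitatively looser than claimed. First, the H\"older step $|\langle f,u_j^{(n)}\rangle|\le2\|f\chi_{A_j^{(n)}}\|_{L^{p(\cdot)}}$ inserts the factor $2$ from \eqref{EQ:Holder} \emph{before} the sequence norm, so even a correct comparison would verify the hypotheses of Lemma \ref{simp1} only with ``$\le2$'' rather than ``$\le1$''; this is harmless for the bounded approximation property after rescaling (the constant merely grows beyond $2$), but it is not the estimate you state, and the paper avoids the loss because with a constant exponent on each atom the pairing $\langle f,u_k\rangle$ is computed exactly. Second, $r_j^{(n)}\to1$ does not follow from small oscillation alone: $\|\chi_A\|_{L^{p(\cdot)}}\|\chi_A\|_{L^{p'(\cdot)}}$ differs from $\mu(A)$ by a factor of order $\mu(A)^{\pm c\,\mathrm{osc}}$, so you need $\mathrm{osc}(p;A_j^{(n)})\cdot\log(1/\mu(A_j^{(n)}))\to0$, while capturing a countable dense family forces atoms of arbitrarily small measure; again the order of the refinements matters and is not addressed. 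These defects are probably repairable along your lines with the quantitative coordination spelled out, but the simpler route is the paper's device: replace $p$ by simple exponents constant on the atoms, for which the sequence-norm computations in Lemma \ref{simp1} are exact and the operators reproduce adapted simple functions on the nose.
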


Since the H\"older inequality \eqref{EQ:Holder} in variable Lebesgue spaces $L^{p(\cdot)}$ holds with constant $2$, we obtain the bounded approximation property in this setting, rather than the metric approximation property valid for the usual $L^p$-spaces. 

%Thus, the statement of Theorem 3.1 in \cite{DRW-J-spectr-theory}, referring to Theorem \ref{app12} above, must be corrected to the `bounded' approximation property, with the rest of the results in \cite{DRW-J-spectr-theory} remaining unchanged.

However, the bounded approximation property implies the metric approximation property if the space is reflexive. In our case, this happens if $\Omega\subset\Rn$ is open and $1<p^{-},  p^{+}<\infty$ (cf. \cite{di:book}), in which case $L^{p(\cdot)}(\Omega)$ has the metric approximation property. This gives a small change to Theorem 3.1 in \cite{DRW-J-spectr-theory}, with the rest of \cite{DRW-J-spectr-theory} unchanged.

\begin{proof}[Proof of Theorem \ref{app12}]
We first consider the case when $p\in\mathcal{P}(\Omega,\mu)$ is a simple function and we write 
\[p(x)=\sum\limits_{j=1}^l p_j1_{\Omega_j}(x),\]
where $p_j>0$, the sets $\Omega_{j}$ are disjoint of finite measure, $1_{\Omega_{j}}$ denotes the characteristic function of the set $\Omega_{j}$.

Let $\mathfrak{P}=\{\Omega_1,\dots,\Omega_l\}$ be a finite family of disjoint measurable sets of finite positive measure. We denote
by \textbf{P} the collection of such families.  To a $\mathfrak{P}\in\textbf{P}$ we associate a  finite rank  operator $L_{\mathfrak{P}}$ from $L^{p(\cdot)}$ into $L^{p(\cdot)}$ defined by
\beq L_{\mathfrak{P}}f:=\sum\limits_{k=1}^l\mu(\Omega_{k})^{-1}\langle f, 1_{\Omega_{k}} \rangle _{\vel, \velp}1_{\Omega_{k}},\label{op1a}\eq 
where $1_{\Omega_{k}}$ denotes the characteristic function of the set $\Omega_{k}$.

We observe that $L_{\mathfrak{P}}f$ is well defined since $0<\mu(\Omega_{k})<\infty$ for $1\leq k\leq l$ and the duality $\langle\cdot ,\cdot \rangle _{\vel, \velp}$ is well defined by using the H\"older inequality for variable exponent spaces,
see \eqref{EQ:Holder}.

In the collection $\bf{P}$ we define the partial order $\mathfrak{P}_1\leq \mathfrak{P}_2$ if any set in $\mathfrak{P}_1$ is the union of sets in $\mathfrak{P}_2$. We also say that $\mathfrak{P}_2$ is {\em finer} than $\mathfrak{P}_1$ if $\mathfrak{P}_1\leq \mathfrak{P}_2$. 
This order begets a directed set. 

Let $\mathfrak{P}_1$ be the family of sets associated to the exponent $p(\cdot)$. By choosing a finer 
 $\mathfrak{P}$ we can rewrite the operator $L_{\mathfrak{P}}$ given by \eqref{op1a} in different ways which will be useful later on:  we define
\[
u_{k}:=\frac{1_{\Omega_{k}}}{\mu(\Omega_k)^{\frac{1}{p_k'}}}\, ,\,\,v_{k}:=\frac{1_{\Omega_{k}}}{\mu(\Omega_k)^{\frac{1}{p_k}}}\,\,, 
\]
so that we can write
\begin{align*}L_{\mathfrak{P}}=&\sum\limits_{k=1}^l\frac{1_{\Omega_{k}}}{\mu(\Omega_k)^{\frac{1}{p_k'}}}\otimes\frac{1_{\Omega_{k}}}{\mu(\Omega_k)^{\frac{1}{p_k}}}\\
=&\sum\limits_{k=1}^lu_{k}\otimes v_{k}\\
=&\sum\limits_{k=1}^l\frac{1}{\mu(\Omega_{k})}\left(1_{\Omega_{k}}\otimes 1_{\Omega_{k}}\right).
\end{align*}
We will prove that $\|L_{\mathfrak{P}}\|_{{\mathcal{L}(L^{p(\cdot)})}}\leq 2$ by applying   
Lemma \ref{simp1} in the case $\bba=\vel$, the finite families $u_k,v_k$ and $q=p(k)=p_k$. Let $f\in\vel$, $g\in\velp$ be such that $\|f\|_{\vel}, \|g\|_{\velp}\leq 1$. Then we have to show that
\[\|\langle f,u_{k}\rangle\|_{\ell^{p(\cdot)}}\leq 1
\; \textrm{ and } \; \|\langle v_{k},g\rangle\|_{\ell^{p'(\cdot)}}\leq 1 . \]
In order to prove the corresponding property for $f\in\vel$, it is enough to consider a   
simple function $f\in\vel$ such that $\|f\|_{\vel}\leq 1$. The general case follows then by a standard density argument.
 By redefining partitions, we can assume that $f$ can be written in the form
\[f(x)=\sum\limits_{k=1}^l\beta_{k}1_{\Omega_{k}}(x).\]
Now, for $\lambda>0$ we have
%\begin{multline*}
$$
\rho_{p(\cdot)}(f/\lambda)=\int\limits_{\Omega}\left|\frac{f(x)}{\lambda}\right|^{p(x)}dx
=\sum\limits_{k=1}^l\int\limits_{\Omega_k}\left|\frac{\beta_k}{\lambda}\right|^{p_k}dx
=\sum\limits_{k=1}^l \left|\frac{\beta_k}{\lambda}\right|^{p_k}\mu(\Omega_k).
$$
%\end{multline*}
We also observe that 
\[\langle f,u_k\rangle =\beta_{k}\frac{\mu(\Omega_k)}{\mu(\Omega_k)^{\frac{1}{p_k'}}}=\beta_{k}\mu(\Omega_k)^{\frac{1}{p_k}}.\]
Hence
\begin{align*}\|\langle f,u_k\rangle\|_{\ell^{p(\cdot)}}=&\inf\{\lambda >0:\sum\limits_{k=1}^l
\left|\frac{\beta_k}{\lambda}\right|^{p_k}\mu(\Omega_k)\leq 1\}\\
=&\inf\{\lambda >0:\rho_{p(\cdot)}(f/\lambda)\leq 1\}\\
=&\|f\|_{\vel}\leq 1.
\end{align*}
We have shown that $\|\langle f,u_k\rangle\|_{\ell^{p(\cdot)}}\leq 1$, the proof of $\|\langle v_{k},g\rangle\|_{\ell^{p'(\cdot)}}\leq 1$ is similar and we omit it.  Hence $\|L_{\mathfrak{P}}\|_{{\mathcal{L}(\vel)}}\leq 2$. 
%Moreover, if $1\leq k_0\leq l$, we observe that for $f_0=1_{\Omega_{k_0}}\mu(\Omega_{k_0})^{-\frac{1}{p_{k_0}}}$
% we have $\|f_0\|_{\vel}=1$, $L_{\mathfrak{P}}f_0=f_0$ and consequently $\|L_{\mathfrak{P}}f_0\|_{{\mathcal{L}(\vel)}}=1$. Therefore $\|L_{\mathfrak{P}}\|%_{{\mathcal{L}(\vel)}}=1$.
\medskip

We now consider the net of finite rank operators $(L_{\mathfrak{P}})_{{\mathfrak{P}}\geq \mathfrak{P}_1}$ and prove that 
\[\lim\limits_{\mathfrak{P}}L_{\mathfrak{P}}f=f\]
for every $f\in \vel$. It is enough to see this for $f$ simple by   the density of simple functions in $\vel$ (cf. \cite{di:book}, Corollary 3.4.10). Indeed, let us write $f$ in the form 
\[f(x):=\sum\limits_{m=1}^s\alpha_{m}1_{\widetilde{\Omega}_m}(x).\]
If we chose $\mathfrak{P}$ finer than $\mathcal{Q}_1=\{\widetilde{\Omega}_m:1\leq m\leq s\}$, then $L_{\mathfrak{P}}f=f$. Indeed, since the sets $\Omega_{k}$ are disjoint we have
\begin{align*}
L(1_{\Omega_j})=&\sum\limits_{k=1}^l\frac{1}{\mu(\Omega_{k})}\left(1_{\Omega_{k}}\otimes 1_{\Omega_{k}}\right)(1_{\Omega_j})\\
=&\sum\limits_{k=1}^l\frac{1}{\mu(\Omega_{k})}\left\langle 1_{\Omega_{j}} , 1_{\Omega_{k}}\right\rangle _{\velp, \vel}(1_{\Omega_k})\\
=&\frac{1}{\mu(\Omega_{j})}\left\langle 1_{\Omega_{j}} , 1_{\Omega_{j}}\right\rangle _{\velp, \vel}(1_{\Omega_j})\\
=&1_{\Omega_j}.
\end{align*}
Therefore, $L_{\mathfrak{P}}f=f$ for $\mathfrak{P}\geq\mathcal{Q}_1$ and thus $\lim\limits_{\mathfrak{P}}L_{\mathfrak{P}}f=f$ in $\vel$.
 We have actually proved  that $\vel$ satisfied the bounded approximation property by an application of Corollary \ref{simp3}.

By an additional argument we will obtain the desired property in the general case. We now consider a variable exponent $p(\cdot)$ such that $p^+<\infty$. Then, there exists an increasing sequence of simple functions $p^j(\cdot)$ such that $\lim\limits_{j}p^j(\cdot)=p(\cdot)$ a.e. For each $j$ we associate to a $\mathfrak{P}^j\in\textbf{P}$ an  
operator $L_{\mathfrak{P}^j}$ as in \eqref{op1a} which due to its form is also defined from $L^{p(\cdot)}(\mu)$ into $L^{p(\cdot)}(\mu)$. 
  The fact that $\lim\limits_{j}L_{\mathfrak{P}^j}f=f$ in $\vel$ follows as in the previous case. We claim that $\|L_{\mathfrak{P}^j}\|_{{\mathcal{L}(\vel)}}\leq 2$ which by an application of Corollary \ref{simp3} with $M=2$ will conclude the proof. Indeed, if $f$ is a simple function such that $\|f\|_{\vel}\leq 1$ and $\mathfrak{P}_1$ is its corresponding family in $\bf{P}$, we observe that by choosing $\mathfrak{P}^j\geq\mathfrak{P}_1$ we obtain $L_{\mathfrak{P}^j}f=f$.
\end{proof}

\section{Nuclearity on variable exponent Lebesgue spaces}

In this section we establish some basic properties for the kernels of nuclear operators on $\vel$ spaces. We also prove 
a characterisation of nuclear operators on $\vel$.

We start by proving a lemma giving  basic properties
 of a kernel corresponding to a nuclear operator on $\vel$ spaces when $\mu$ is a finite measure. In the rest of this section we shall consider two variable exponents $p(\cdot)\in\mathcal{P}(\Omega,\mu), q(\cdot)\in\mathcal{P}(\Xi,\nu) $ and the  
 variable exponent conjugate $p'(\cdot)$ of $p(\cdot)$ such that
$$\frac{1}{p(\cdot)}+\frac{1}{p'(\cdot)}=1.$$

\begin{lem}\label{l1}
Let $({\Omega},{\mathcal{M}},\mu)$ and
$({\Xi},{\mathcal{M}}',\nu)$ be two finite and complete measure spaces.
Let $f\in L^{p(\cdot)}(\mu)$, and $(g_n)_n,(h_n)_n$ be sequences in
$L^{q(\cdot)}(\nu)$
  and $L^{p'(\cdot)}(\mu)$, respectively, such that $\sum \limits_{n=1}^\infty \| g_n\|_{L^{q(\cdot)}} \|h_n\|_{L^{p'(\cdot)}}<\infty$. Then
\begin{itemize}
\item[(a)] The series $\sum\limits_{j=1}^{\infty} g_j(x)h_j(y)$
converges absolutely for a.e. $(x,y)$ and, consequently, 
${\displaystyle\lim\limits_n\sum\limits_{j=1}^n
        g_j(x)h_j(y)\,\mbox{ is finite for a.e.}\, (x,y)}.$

\item[(b)] For $k(x,y):={\displaystyle \sum\limits_{j=1}^{\infty} g_j(x)h_j(y)},$
we have $k\in L^1(\nu\otimes\mu)$.

\item[(c)] If $k_n(x,y)=\sum\limits_{j=1}^n
        g_j(x)h_j(y) $ then $\|k_n-k\|_{L^1(\nu\otimes\mu)}\rightarrow 0$.

\item[(d)] ${\displaystyle\lim\limits_n \int\limits_{\Omega}\left(\sum\limits_{j=1}^n  g_j(x)h_j(y)\right)f(y)d\mu(y)= \int\limits_{\Omega} \left(\sum\limits_{j=1}^\infty  g_j(x)h_j(y)\right)f(y)d\mu(y),}$
\end{itemize}
\noindent \textit{ for  a.e} $x$.
\end{lem}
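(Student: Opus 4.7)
My plan is to reduce everything to classical $L^1$ arguments (Fubini--Tonelli and dominated convergence) by exploiting the continuous embeddings $L^{q(\cdot)}(\nu)\hookrightarrow L^1(\nu)$ and $L^{p'(\cdot)}(\mu)\hookrightarrow L^1(\mu)$ which hold because both measures are finite. Concretely, applying the H\"older inequality \eqref{EQ:Holder} against the constant function $1$ yields
\[
\|g\|_{L^1(\nu)}\le 2\,\|g\|_{L^{q(\cdot)}(\nu)}\|1\|_{L^{q'(\cdot)}(\nu)}=:C_q\,\|g\|_{L^{q(\cdot)}(\nu)},
\]
and analogously $\|h\|_{L^1(\mu)}\le C_{p'}\|h\|_{L^{p'(\cdot)}(\mu)}$, where $C_q,C_{p'}<\infty$ since $\nu(\Xi),\mu(\Omega)<\infty$ and the variable exponents are bounded. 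This is the only place where the variable exponent structure really enters; once converted to $L^1$ norms, everything becomes standard.

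For part (a), Tonelli's theorem gives
\[
\int_{\Xi}\int_{\Omega}\sum_{j=1}^{\infty}|g_j(x)h_j(y)|\,d\mu(y)\,d\nu(x)
=\sum_{j=1}^{\infty}\|g_j\|_{L^1(\nu)}\|h_j\|_{L^1(\mu)}
\le C_qC_{p'}\sum_{j=1}^{\infty}\|g_j\|_{L^{q(\cdot)}}\|h_j\|_{L^{p'(\cdot)}}<\infty,
\]
so the integrand is finite $\nu\otimes\mu$-a.e., which is (a). The same estimate shows that $k$ is defined a.e.\ and lies in $L^1(\nu\otimes\mu)$, proving (b). For (c), apply the same bound to the tail:
\[
\|k-k_n\|_{L^1(\nu\otimes\mu)}
\le \sum_{j>n}\|g_j\|_{L^1(\nu)}\|h_j\|_{L^1(\mu)}
\le C_qC_{p'}\sum_{j>n}\|g_j\|_{L^{q(\cdot)}}\|h_j\|_{L^{p'(\cdot)}}\xrightarrow[n\to\infty]{}0,
\]
by the convergence of the full series.

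For part (d), the point is to pass the limit in $n$ through the $y$-integral for a.e.\ fixed $x$. H\"older \eqref{EQ:Holder} gives $\|h_jf\|_{L^1(\mu)}\le 2\|h_j\|_{L^{p'(\cdot)}}\|f\|_{L^{p(\cdot)}}$, hence
\[
\int_{\Xi}\sum_{j=1}^{\infty}|g_j(x)|\,\|h_jf\|_{L^1(\mu)}\,d\nu(x)
\le 2\|f\|_{L^{p(\cdot)}}\sum_{j=1}^{\infty}\|g_j\|_{L^1(\nu)}\|h_j\|_{L^{p'(\cdot)}}<\infty,
\]
so for $\nu$-a.e.\ $x$ one has $S(x):=\sum_{j=1}^{\infty}|g_j(x)|\,\|h_jf\|_{L^1(\mu)}<\infty$. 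Fix such an $x$; then the function $y\mapsto\sum_{j=1}^{\infty}|g_j(x)h_j(y)f(y)|$ has $\mu$-integral bounded by $S(x)$, so it is an $L^1(\mu)$-dominant for the partial sums $\sum_{j=1}^{n}g_j(x)h_j(y)f(y)$. Dominated convergence then yields
\[
\lim_{n\to\infty}\int_{\Omega}\Bigl(\sum_{j=1}^{n}g_j(x)h_j(y)\Bigr)f(y)\,d\mu(y)
=\int_{\Omega}\Bigl(\sum_{j=1}^{\infty}g_j(x)h_j(y)\Bigr)f(y)\,d\mu(y),
\]
which is (d). There is no real obstacle here; the only subtlety is to make sure the passage from $L^{p(\cdot)},L^{p'(\cdot)},L^{q(\cdot)}$ down to $L^1$ is explicitly recorded with the factor $2$ from \eqref{EQ:Holder} and the finiteness of $\mu(\Omega),\nu(\Xi)$.
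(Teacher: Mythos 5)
Your proof is correct and follows essentially the same route as the paper: H\"older against the constant function $1$ to pass to $L^1$ (exploiting finiteness of the measures), Tonelli/monotone convergence for (a)--(c), and dominated convergence with an a.e.-finite dominant for (d); the paper merely organizes this by carrying $f$ throughout and specializing to $f\equiv 1$ for (a)--(c). One cosmetic remark: the finiteness of $\|1\|_{L^{q'(\cdot)}(\nu)}$ needs only $\nu(\Xi)<\infty$, not boundedness of the exponent, which is in any case not assumed for $q'(\cdot)$.
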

\begin{proof} We first write $\tilde k_n(x,y):=\sum\limits_{j=1}^n g_j(x)h_j(y)f(y)$ and note that since $\nu$ is finite then $\|1\|_{L^{q'(\cdot)}(\nu)}<\infty$. Indeed, $\int\limits_{\Xi}|1/\lambda|^{q'(x)}d\nu(x)=\nu(\Xi)<\infty ,$ for $\lambda=1$. Now, by applying the H{\"o}lder inequality \eqref{EQ:Holder} which is affected by the factor 2 in the setting of variable exponents we obtain 
\begin{align*}
\int\limits_{\Omega}\!\int\limits_{\Xi}|\tilde k_n(x,y)|d\nu(x)d\mu(y)\leq&\int\limits_{\Omega}\!\int\limits_{\Xi}\sum\limits_{j=1}^n
|g_j(x)
  h_j(y)f(y)|d\nu(x)d\mu(y)\\
%$$\leq\sum\limits_{j=1}^n \int\!\int |g_j(x)||h_j(y)||f(y)|d\nu(x)d\mu_1(y)
\leq &\sum\limits_{j=1}^n \int\limits_{\Xi}
|g_j(x)|d\nu(x)\int\limits_{\Omega}|h_j(y)||f(y)|d\mu(y)\\
%$$\leq  \sum\limits_{j=1}^n \| g_j\|_{L^p}\left(\mu(\Omega)\right)^{\frac {1}{q}}\| h_j\|_{L^q}\| f\|_{L^p}$$
\leq& 2\|1\|_{L^{q'(\cdot)}(\nu)}\|f\|_{L^{p(\cdot)}(\mu)}\sum \limits_{j=1}^n \| g_j\|_{L^{q(\cdot)}(\nu)}
\|h_j\|_{L^{p'(\cdot)}(\mu)}\\
\leq & M<\infty \; \mbox{for all}\; n.
\end{align*}
Hence $\|\tilde k_n\|_{{L}^1(\nu\otimes\mu)}\leq M$ for all $n$. We now consider a sequence $(s_n)$ defined by
 $$s_n(x,y):=\sum\limits_{j=1}^n |g_j(x)h_j(y)f(y)|,$$ 
 which is increasing in   $L^1(\nu\otimes\mu)$ and satisfies 
 \[
 \sup\limits_n \int\int |s_n(x,y)|d\mu(x)d\mu(y) \leq M<\infty.
 \] By an application of the monotone convergence theorem, the limit 
 $$
 s(x,y)=\lim\limits_n s_n(x,y)
 $$
 exists for a.e. $(x,y)$ and  $s\in L^1(\nu\otimes\mu)$. Moreover, since $f=1\in\vel(\mu)$ and the fact that 
$|k(x,y)|\leq s(x,y)$ we deduce (a) and (b). 

The part (c) can be deduced
by using the Lebesgue dominated convergence theorem applied to the sequence
$(\tilde k_n)$ dominated by $s(x,y)$, and setting $f\equiv 1$, in which case $\tilde k_n=k_n$.

For the part (d) we observe that
with $\tilde k_n(x,y)=\sum\limits_{j=1}^n g_j(x)h_j(y)f(y)$, we have
$|\tilde k_n(x,y)|\leq s(x,y)$ for all $n$ and every $(x,y)$. From the
fact that $s\in L^1(\nu\otimes\mu)$ we obtain that
$s(x,\cdot)\in L^1(\mu)$ for a.e $x$. Then (d) is obtained from Lebesgue dominated
convergence theorem.
\end{proof}

\begin{rem} We observe that the condition of finiteness of the measures in the lemma above is crucial to obtain 
$k=k(x,y)\in
L^1(\nu\otimes\mu)$. For instance, let $\Omega=\Xi=\ern$, $\mu=\nu$ be the Lebesgue measure and 
$p=p(\cdot), q=q(\cdot)$ constant exponents such that $1\leq p,q<\infty$. 
Then by using the fact that $p'>1$, we define $k(x,y):=g(x)h(y)$, with $g\in L^{q}(\mu)\setminus \{0\},h\in L^{p'}(\mu)\setminus L^{1}(\mu)$. Then  
$$
\int\limits_{\ern}\int\limits_{\ern}|k(x,y)|d\mu(x)d\mu(y)=\int\limits_{\ern}|g(x)|d\mu(x)\int\limits_{\ern}|h(y)|d\mu(y)=\infty.
$$
\end{rem}

We can now formulate a characterisation of $r$-nuclear operators on variable exponent Lebesgue spaces for finite measure spaces. 

\begin{thm}\label{ch1} 
Let $({\Omega},{\mathcal{M}},\mu)$ and
$({\Xi},{\mathcal{M}}',\nu)$ be two complete and finite measure spaces. Let $0<r\leq 1$. Then $T$ is
  $r$-nuclear operator from $\vel(\mu)$ into $\velq(\nu)$ if and only if there exist a sequence
 $(g_n)$ in $\velq(\nu)$, and a sequence $(h_n)$ in $\velp(\mu)$ such that $\sum \limits_{n=1}^\infty \|
 g_n\|_{\velq(\nu)}^r\|h_n\|_{\velp(\mu)}^r<\infty$, and such that for all $f\in\vel(\mu)$, we have
\[Tf(x)=\int\limits_{\Omega}\left(\sum\limits_{n=1}^{\infty}
  g_n(x)h_n(y)\right)f(y)d\mu(y), \,\,\mbox{for a.e } x.\]
\end{thm}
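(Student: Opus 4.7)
\smallskip

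The plan is to prove both implications by combining the identification of the dual $(L^{p(\cdot)}(\mu))'$ with the associate space $L^{p'(\cdot)}(\mu)$ together with the interchange of sum and integral furnished by Lemma \ref{l1}. Since $(\Omega,\mu)$ and $(\Xi,\nu)$ are finite and $p(\cdot)$, $q(\cdot)$ are variable exponents (which we take to be bounded so the duality makes sense in the form we use), every $x'\in (L^{p(\cdot)}(\mu))'$ is represented by integration against a unique $h\in L^{p'(\cdot)}(\mu)$, with $\|h\|_{L^{p'(\cdot)}(\mu)}$ and $\|x'\|_{(L^{p(\cdot)})'}$ comparable up to the Hölder factor $2$ from \eqref{EQ:Holder}.

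For the sufficiency direction ($\Leftarrow$), I assume we are given $(g_n)\subset L^{q(\cdot)}(\nu)$ and $(h_n)\subset L^{p'(\cdot)}(\mu)$ with $\sum_n\|g_n\|_{L^{q(\cdot)}(\nu)}^r\|h_n\|_{L^{p'(\cdot)}(\mu)}^r<\infty$. I define the functionals $x_n'\in (L^{p(\cdot)}(\mu))'$ by $x_n'(f):=\int_\Omega h_n(y)f(y)\,d\mu(y)$; from \eqref{EQ:Holder} we get $\|x_n'\|_{(L^{p(\cdot)})'}\leq 2\|h_n\|_{L^{p'(\cdot)}(\mu)}$, so $\sum_n\|x_n'\|^r\|g_n\|^r\leq 2^r\sum_n\|h_n\|^r\|g_n\|^r<\infty$ and hence $\tilde Tf:=\sum_n x_n'(f)g_n$ defines an $r$-nuclear operator from $L^{p(\cdot)}(\mu)$ into $L^{q(\cdot)}(\nu)$. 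To check this is the operator $T$ of the statement, I note that $r$-summability for $0<r\leq 1$ implies $1$-summability (since the terms tend to zero, eventually $a_n\leq a_n^r$), so Lemma \ref{l1}(d) applies and gives
\[
\int_\Omega\!\Bigl(\sum_{n=1}^{\infty} g_n(x)h_n(y)\Bigr)f(y)\,d\mu(y)=\sum_{n=1}^{\infty} g_n(x)\int_\Omega h_n(y)f(y)\,d\mu(y)=\tilde Tf(x)
\]
for a.e.\ $x$, which is exactly $Tf(x)$.

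For the necessity direction ($\Rightarrow$), I start with an $r$-nuclear representation $Tf=\sum_n x_n'(f)g_n$ with $x_n'\in(L^{p(\cdot)}(\mu))'$, $g_n\in L^{q(\cdot)}(\nu)$, and $\sum_n\|x_n'\|^r\|g_n\|^r<\infty$. Using the associate-space identification I pick $h_n\in L^{p'(\cdot)}(\mu)$ representing $x_n'$, with $\|h_n\|_{L^{p'(\cdot)}(\mu)}$ controlled by a fixed multiple of $\|x_n'\|$; this yields the required $r$-summability $\sum_n\|g_n\|^r\|h_n\|^r<\infty$. Again by the argument above, the terms $\|g_n\|\|h_n\|$ are $1$-summable, so Lemma \ref{l1}(d) lets me commute sum and integral and recover the kernel representation
\[
Tf(x)=\sum_{n=1}^{\infty}g_n(x)\int_\Omega h_n(y)f(y)\,d\mu(y)=\int_\Omega\!\Bigl(\sum_{n=1}^{\infty}g_n(x)h_n(y)\Bigr)f(y)\,d\mu(y)
\]
for a.e.\ $x$. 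The main obstacles are administrative rather than technical: one must carefully track the factor of $2$ coming from the non-isometric duality in \eqref{EQ:Holder} (which only affects the implicit constants in the $r$-summability and does not alter convergence), and one must verify that the $1$-summability condition required by Lemma \ref{l1} follows from the $r$-summability assumption; finiteness of the measures is used implicitly both via Lemma \ref{l1} and via the standard duality representation for $L^{p(\cdot)}$.
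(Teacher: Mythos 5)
Your argument is correct and follows essentially the same route as the paper's proof: identify the dual functionals with functions in $L^{p'(\cdot)}(\mu)$ (bounded exponent, factor-$2$ H\"older), reduce $0<r<1$ to $r=1$ by noting that $r$-summability of $\|g_n\|\,\|h_n\|$ implies $1$-summability, and use Lemma \ref{l1}(d) to pass between the nuclear series representation and the kernel representation. The only step the paper makes explicit that you leave implicit is the identification of the $L^{q(\cdot)}(\nu)$-norm limit of the partial sums with their a.e.\ pointwise limit (handled there via a.e.-convergent subsequences and a dominated-convergence argument with the majorant $\gamma(x)=\|f\|_{L^{p(\cdot)}(\mu)}\sum_{j}\|h_j\|_{L^{p'(\cdot)}(\mu)}|g_j(x)|$), which is a routine addition to your scheme.
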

\begin{proof} We will assume that $r=1$. The case $0<r<1$ follows by inclusion. 
Let $T$ be a nuclear operator from $\vel(\mu)$ into $\velq(\nu)$. Then
there exist sequences $(g_n)$ in $\velq(\nu)$, $(h_n)$ in
$\velp(\mu)$ such that
 $\sum \limits_{n=1}^\infty \| g_n\|_{\velq(\nu)}
 \|h_n\|_{\velp(\mu)}<\infty$ and
\[Tf= \sum\limits_n \left <f,h_{n}\right>g_n .\]
\noindent Now
 \[Tf=\sum\limits_n \left <f,h_n\right>g_n =\sum\limits_n \left(\int\limits_{\Omega}
   h_n(y)f(y) d\mu(y)\right)g_n  \, ,  \]
where the sums converge with respect to the $\velq(\nu)$-norm. There exist
(cf. \cite{di:book}, Lemma 3.2.10)
two sub-sequences $(\widetilde{g}_n)$ and $(\widetilde{h}_n)$ of $(g_n)$ and
$(h_n)$ respectively such that

\[(Tf)(x)=\sum\limits_n \left <f,\widetilde{h}_n\right>\widetilde{g}_n (x)=\sum\limits_n \left(\int\limits_{\Omega}\widetilde{h}_n(y)f(y) d\mu(y)\right)\widetilde{g}_n (x),  \,\,\mbox{ for a.e } x. \]
\noindent Now taking into account that the pair $\left((\widetilde{g}_n),(\widetilde{h}_n) \right)$
satisfies
\[\sum \limits_{n=1}^\infty \|\widetilde{ g}_n\|_{\velq(\nu)}
 \|\widetilde{h}_n\|_{\velp(\mu)}<\infty \, ,\]
  and by applying Lemma \ref{l1} (d), it follows that
\begin{align*}
 \sum\limits_{n=1}^{\infty} \left(\int\limits_{\Omega}\widetilde{h}_n(y)f(y)
d\mu(y)\right)\widetilde{g}_n(x)=&\lim\limits_n  \sum\limits_{j=1}^n
\left(\int\limits_{\Omega}\widetilde{h}_j(y)f(y) d\mu(y)\right)\widetilde{g}_j(x)\\
=&\lim\limits_n \int\limits_{\Omega} \left(\sum\limits_{j=1}^n
\widetilde{g}_j(x)\widetilde{h}_j(y)f(y)\right)d\mu(y)\\
=&\int\limits_{\Omega}\left(\sum\limits_{n=1}^{\infty}
  \widetilde{g}_n(x)\widetilde{h}_n(y)\right)f(y)d\mu(y)\, , \,\mbox{ for a.e } x.  \end{align*}
Conversely, let us assume that there exist sequences
$(g_n)_n$ in $\velq(\nu)$, and $(h_n)_n$ in $\velp(\mu)$ such 
that $\sum \limits_{n=1}^\infty \| g_n\|_{\velq(\nu)}
 \|h_n\|_{\velp(\mu)}<\infty$, and for all $f\in\vel(\mu)$
\[Tf(x)=\int\limits_{\Omega}\left(\sum\limits_{n=1}^{\infty}
  g_n(x)h_n(y)\right)f(y)d\mu(y)\, , \,\,\mbox{ for a.e } x.\]
The  Lemma \ref{l1} (d) gives
\begin{align*}
 \int\limits_{\Omega}\left(\sum\limits_{n=1}^{\infty}
   g_n(x)h_n(y)\right)f(y)d\mu(y)&=&\lim\limits_n
 \int\limits_{\Omega}\left(\sum\limits_{j=1}^n  g_j(x)h_j(y)f(y)\right)d\mu(y)\\
  &=&\lim\limits_n  \sum\limits_{j=1}^n \left(\int\limits_{\Omega} h_j(y)f(y)
   d\mu(y)\right)g_j(x)\\
   &=& \sum\limits_n \left(\int\limits_{\Omega} h_n(y)f(y)
   d\mu(y)\right)g_n(x)\\
&=&\sum\limits_n \left <f,h_n\right>g_n
 (x)=(Tf)(x) \, ,\,\,a.e.\, x.
\end{align*}
To prove that $Tf=\sum\limits_n \left
<f,h_n\right>g_n $ in $\velq(\nu)$ we let $s_n:=
\sum\limits_{j=1}^n\left<f,h_j\right>g_j$, then $(s_n)_n$ is a sequence in
$\velq(\nu)$ and
$$|s_n(x)|\leq \|f\|_{\vel(\mu)}\sum\limits_{j=1}^n
\|h_j\|_{\velp(\mu)}|g_j(x)|$$
\[\leq\|f\|_{\vel(\mu)}\sum\limits_{j=1}^\infty\|h_j\|_{\velp(\mu)}|g_j(x)|=:\gamma(x),\, \mbox{ for all }n.\]
Moreover, $\gamma$ is well defined and $\gamma\in \velq(\nu)$
since it is the increasing limit of the sequence
$(\gamma_n)_n=(\|f\|_{\vel(\mu)}\sum\limits_{j=1}^n
\|h_j\|_{\velp(\mu)}|g_j(x)|)_n$ of $\velq(\nu)$ functions and
$$
\|\gamma_n\|_{\velq(\nu)}\leq\|f\|_{\vel}\sum\limits_{j=1}^\infty\|h_j\|_{\velp(\mu)}\|g_j\|_{\velq(\nu)}\leq M<\infty.
$$
By  the monotone convergence theorem we see that
$\gamma\in \velq(\nu)$. Finally, applying the Lebesgue dominated
convergence theorem we deduce that $s_n\rightarrow Tf$ in $\velq(\nu)$.
\end{proof}

In the sequel we also establish a characterisation of $r$-nuclear operators for $\sigma$-finite measures. 
In order to get an analogue of the finite measures setting we first generalise Lemma \ref{l1}. 

\begin{lem}\label{l1ab} Let $({\Omega},{\mathcal{M}},\mu)$ and
$({\Xi},{\mathcal{M}}',\nu)$ be two $\sigma$-finite, complete measure spaces.
Let $f\in L^{p(\cdot)}(\mu)$, and $(g_n)_n,(h_n)_n$ be sequences in
$L^{q(\cdot)}(\nu)$
  and $L^{p'(\cdot)}(\mu)$, respectively, such that $\sum \limits_{n=1}^\infty \| g_n\|_{L^{q(\cdot)}} \|h_n\|_{L^{p'(\cdot)}}<\infty$. Then
 the parts (a) and (d) of Lemma \ref{l1} hold.
\end{lem}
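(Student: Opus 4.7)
The goal is to drop the finiteness assumption of Lemma \ref{l1} for parts (a) and (d) only. In the original proof, finiteness of $\nu$ was used exclusively to guarantee $\|1\|_{L^{q'(\cdot)}(\nu)}<\infty$, from which one deduced the $L^{1}(\nu\otimes\mu)$ bound on the kernel. Parts (b) and (c) genuinely need this, but (a) and (d) are pointwise/almost-everywhere statements in the product variable and in the $y$-variable respectively, so they can be recovered without global integrability of the kernel.

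My plan is to replace the $L^{1}$-majorant $s(x,y)$ of the original proof by a scalar majorant in the $x$-variable. Set
\[
G(x) := \sum_{j=1}^{\infty}|g_{j}(x)|\,\|h_{j}\|_{L^{p'(\cdot)}(\mu)}.
\]
The partial sums are a monotone increasing sequence in $L^{q(\cdot)}(\nu)$, and by the triangle inequality of the variable exponent norm their norms are bounded by $\sum_{j}\|g_{j}\|_{L^{q(\cdot)}(\nu)}\|h_{j}\|_{L^{p'(\cdot)}(\mu)}<\infty$. Either completeness of $L^{q(\cdot)}(\nu)$ (absolutely convergent series converge) or monotone convergence applied to the modular $\rho_{q(\cdot)}$ then places $G\in L^{q(\cdot)}(\nu)$; in particular $G(x)<\infty$ for $\nu$-a.e.\ $x$.

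For part (a), fix $x$ with $G(x)<\infty$ and consider $\Phi_{x}(y):=\sum_{j}|g_{j}(x)||h_{j}(y)|$. Exactly the same monotone argument, now in $L^{p'(\cdot)}(\mu)$, gives $\|\Phi_{x}\|_{L^{p'(\cdot)}(\mu)}\le G(x)<\infty$, so $\Phi_{x}\in L^{p'(\cdot)}(\mu)$ and hence $\Phi_{x}(y)<\infty$ for $\mu$-a.e.\ $y$. Since this holds for $\nu$-a.e.\ $x$ and both measures are $\sigma$-finite, Fubini yields that $\sum_{j}g_{j}(x)h_{j}(y)$ converges absolutely on a set of full $\nu\otimes\mu$-measure, which is (a).

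For part (d), fix $x$ with $G(x)<\infty$. Writing $k_{n}(x,y)=\sum_{j=1}^{n}g_{j}(x)h_{j}(y)$, we have the pointwise domination $|k_{n}(x,y)f(y)|\le \Phi_{x}(y)|f(y)|$. Applying the H\"older inequality \eqref{EQ:Holder} to $\Phi_{x}\in L^{p'(\cdot)}(\mu)$ and $f\in L^{p(\cdot)}(\mu)$ gives $\Phi_{x}\cdot|f|\in L^{1}(\mu)$ with norm at most $2G(x)\|f\|_{L^{p(\cdot)}(\mu)}$. The Lebesgue dominated convergence theorem then yields
\[
\lim_{n}\int_{\Omega}\Big(\sum_{j=1}^{n}g_{j}(x)h_{j}(y)\Big)f(y)\,d\mu(y)=\int_{\Omega}\Big(\sum_{j=1}^{\infty}g_{j}(x)h_{j}(y)\Big)f(y)\,d\mu(y)
\]
for $\nu$-a.e.\ $x$, which is (d).

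The only delicate point is the assertion $G\in L^{q(\cdot)}(\nu)$: for bounded exponents one cannot simply invoke $\|\!\!\sum f_{j}\|\le\sum\|f_{j}\|$ for an infinite sum without a completeness or monotone-convergence step. Once this is settled, the rest of the proof is a straightforward adaptation of the finite-measure argument, but with the $L^{1}(\nu\otimes\mu)$-dominator $s(x,y)$ replaced by the partial dominator $\Phi_{x}(y)|f(y)|\in L^{1}(\mu)$ valid for a.e.\ fixed $x$.
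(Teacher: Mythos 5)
Your proof is correct, and it takes a genuinely different route from the paper. The paper proves part (a) by exhaustion: it writes $\Omega=\bigcup_k\Omega_k$, $\Xi=\bigcup_j\Xi_j$ with pieces of finite measure, restricts the $g_n$, $h_n$ to these pieces, applies the finite-measure Lemma \ref{l1}(a) on each product $\Xi_j\times\Omega_k$, and patches the almost-everywhere statements together; part (d) is then obtained by citing dominated convergence ``as in'' the proof of Theorem \ref{ch1}, without exhibiting the dominating function explicitly. You instead avoid the exhaustion altogether: you build the scalar majorant $G(x)=\sum_j|g_j(x)|\,\|h_j\|_{L^{p'(\cdot)}(\mu)}\in L^{q(\cdot)}(\nu)$ via the monotone-convergence/Fatou property of the Luxemburg norm (the same device the paper itself uses for the function $\gamma$ at the end of the proof of Theorem \ref{ch1}, so your flagged ``delicate point'' is settled exactly as the authors would settle it), deduce $\Phi_x=\sum_j|g_j(x)||h_j|\in L^{p'(\cdot)}(\mu)$ for $\nu$-a.e.\ $x$, and conclude (a) by Tonelli and (d) by the H\"older inequality \eqref{EQ:Holder} plus dominated convergence with the explicit dominator $\Phi_x|f|\in L^1(\mu)$, bounded by $2G(x)\|f\|_{L^{p(\cdot)}(\mu)}$. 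What each approach buys: the paper's argument is shorter because it recycles Lemma \ref{l1} wholesale, but it leaves the integrable-in-$y$ dominator for (d) implicit; your argument is self-contained, does not need the finite-measure lemma at all, and makes the domination for (d) explicit and quantitative, which is arguably a cleaner justification of that step.
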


\begin{proof} (a) There exist two sequences
$(\Omega_k)_k$ and $(\Xi_j)_j$ of disjoint measurable subsets of $\Omega$ and
$\Xi$ respectively such that $\bigcup_k\Omega_k=\Omega,$ $\bigcup_j\Xi_j=\Xi$ and for all $j,k$
\[\mu(\Omega_k), \nu(\Xi_j)<\infty .\]

We now consider the  respective restricted measure spaces $(\Omega_k,{\mathcal
   M}_k,\mu_k)$ and also  $(\Xi_j,{\mathcal
   M '}_j,\nu_j)$ that we obtain by restricting $\Omega$ to
 $\Omega_k$, and $\Xi$ to
 $\Xi_j$ for every $k,j$, and restricting the functions $g_n$
 to $\Xi_j$, and  $h_n$ to $\Omega_k$ for each $n$. Then, for all $k,j$
\[\sum \limits_{n=1}^\infty \| g_n\|_{\velq(\nu_j)} \|h_n\|_{\velp(\mu_k)}<\infty. \]
\noindent By Lemma \ref{l1} (a) it follows that 
 $\sum\limits_{j=1}^{\infty} g_j(x)h_j(y)$ converges absolutely for  a.e $(x,y)\in
\Xi^j\times \Omega^k $. Hence
 $\sum\limits_{j=1}^{\infty}
g_j(x)h_j(y)$ converges absolutely for almost every  $(x,y)\in \Xi\times \Omega $. This proves part (a).\\
\noindent From the part (a) the series
$\sum\limits_{j=1}^{\infty} g_j(x)h_j(y)f(y)$ converges absolutely
for a.e. $(x,y)\in \Xi\times\Omega$, the part (d) follows from the Lebesgue dominated
convergence theorem applied as in the ``only if" part of the proof
of Theorem \ref{ch1}.
\end{proof}

We are now ready to give the main result of this section, the extension of Theorem \ref{ch1} 
to the setting of $\sigma$-finite measures.

\begin{thm}\label{ch2}  
Let $({\Omega},{\mathcal{M}},\mu)$ and
$({\Xi},{\mathcal{M}}',\nu)$ be $\sigma$-finite complete measure spaces. Let $0<r\leq 1$. Then $T$ is
  $r$-nuclear operator from $\vel(\mu)$ into $\velq(\nu)$ if and only if there exist a sequence
 $(g_n)$ in $\velq(\nu)$, and a sequence $(h_n)$ in $\velp(\mu)$ such that $\sum \limits_{n=1}^\infty \|
 g_n\|_{\velq(\nu)}^r\|h_n\|_{\velp(\mu)}^r<\infty$, and such that for all $f\in\vel(\mu)$ we have
\[
Tf(x)=\int\limits_{\Omega}\left(\sum\limits_{n=1}^{\infty}
  g_n(x)h_n(y)\right)f(y)d\mu(y), \,\,\mbox{for a.e } x.
\]

Moreover, if $\Omega=\Xi$, $\mu=\nu$, $p(\cdot)=q(\cdot)$, $p^+<\infty$, and $T$ is $r$-nuclear in 
$\mathcal{L}(\vel(\mu))$, then  
\[\Tr(T)=\sum\limits_{n=1}^{\infty} \left< g_n,h_n \right>=\int\limits_{\Omega}\sum\limits_{n=1}^{\infty}g_n(x)h_n(x)d\mu.\]
\end{thm}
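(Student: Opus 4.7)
For the characterization, the argument runs in parallel with the proof of Theorem \ref{ch1}, simply substituting Lemma \ref{l1ab} in place of Lemma \ref{l1}. In the forward direction I would start from a nuclear representation $Tf = \sum_n \langle f, h_n\rangle g_n$ convergent in the $\velq(\nu)$-norm, extract subsequences via \cite[Lemma 3.2.10]{di:book} to upgrade norm convergence to pointwise a.e.\ convergence, and then invoke Lemma \ref{l1ab}(d) to interchange the sum and the integral, producing the kernel representation. For the reverse direction, I would read off the rank-one decomposition $\langle\cdot, h_n\rangle g_n$ and use Lemma \ref{l1ab}(d) together with a dominated convergence argument to show that the partial sums $s_n := \sum_{j=1}^{n}\langle f, h_j\rangle g_j$ converge to $Tf$ in $\velq(\nu)$, controlled by the $\velq$-function $\gamma(x):= \|f\|_{\vel}\sum_{j}\|h_j\|_{\velp}|g_j(x)|$ (placed in $\velq(\nu)$ by monotone convergence). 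The case $0 < r < 1$ reduces to $r=1$ via the inclusion $\mathcal{N}_r\subseteq\mathcal{N}_1$.

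For the trace formula, the essential input is Theorem \ref{app12}, which guarantees the bounded approximation property of $\vel(\mu)$ (since $p^+<\infty$); by Grothendieck's theorem this makes $\Tr(T)$ well-defined on $\mathcal{N}(\vel(\mu))$ independently of the chosen representation. Given an $r$-nuclear representation with $0 < r\le 1$, I would use the elementary observation that $\sum_n \|g_n\|^{r}\|h_n\|^{r}<\infty$ forces $\|g_n\|\|h_n\|\to 0$, and that $a\le a^{r}$ for $a\in[0,1]$ and $r\le 1$, to conclude $\sum_n \|g_n\|\|h_n\|<\infty$. Hence the same sequences deliver a genuine $1$-nuclear representation, and the definition of the trace together with the $\vel$--$\velp$ duality pairing gives
\[
\Tr(T) = \sum_{n} \langle g_n, h_n\rangle = \sum_{n}\int_{\Omega} g_n(x)h_n(x)\,d\mu(x).
\]

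To pass to the integral form, the variable exponent H\"older inequality \eqref{EQ:Holder} yields
\[
\sum_{n}\int_{\Omega}|g_n(x)h_n(x)|\,d\mu(x) \le 2\sum_{n}\|g_n\|_{\vel}\|h_n\|_{\velp} < \infty,
\]
so Tonelli places $\sum_n g_n h_n$ in $L^{1}(\mu)$ and Fubini legitimises the exchange
\[
\Tr(T) = \int_{\Omega}\sum_{n} g_n(x)h_n(x)\,d\mu(x).
\]
The main obstacle is conceptual rather than computational: one must correctly invoke Theorem \ref{app12} to make $\Tr(T)$ well-defined, and verify that $r$-nuclearity with $r\le 1$ actually delivers a $1$-nuclear representation using the same sequences, which is what legitimises both the identification $\Tr(T)=\sum_n\langle g_n, h_n\rangle$ and the subsequent Fubini-type swap.
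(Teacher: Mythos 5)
Your proposal is correct and follows essentially the same route as the paper: the characterisation is obtained by rerunning the proof of Theorem \ref{ch1} with Lemma \ref{l1ab}(d) in place of Lemma \ref{l1}(d), and the trace formula rests on the bounded approximation property from Theorem \ref{app12} (so the trace is representation-independent), the identification $\Tr(T)=\sum_n\langle g_n,h_n\rangle$, and a H\"older--Tonelli--Fubini argument to exchange the sum and the integral. Your explicit reduction from an $r$-nuclear to a $1$-nuclear representation with the same sequences is only implicit in the paper (``follows by inclusion''), but it is the same idea.
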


\begin{proof} 
For the characterisation it is enough to consider the case $r=1$. But that characterisation
 now follows from the same lines of the proof of Theorem \ref{ch1} by replacing the references to part (d) of Lemma \ref{l1} by part (d) of Lemma \ref{l1ab}.  On the other hand, since $p^+<\infty$ the bounded approximation property holds and the trace is well defined.  We observe that by definition \eqref{EQ:Trace} we can use the
	sequences $g_n, h_n$ to calculate the trace, which gives $\Tr(T)=\sum\limits_{n=1}^{\infty} \left< g_n,h_n \right>$. Moreover, 
	the kernel $k(x,y)=\sum\limits_{n=1}^{\infty} g_n(x)h_n(y)$ is well defined on the diagonal since
	for $p(\cdot)=q(\cdot)$, we have
\begin{align*}|k(x,x)|\leq & \int\limits_{\Omega}\sum\limits_{n=1}^{\infty} |g_n(x)h_n(x)|d\mu\\
 =&\sum\limits_{n=1}^{\infty}\int\limits_{\Omega} |g_n(x)h_n(x)|d\mu\\
\leq &\sum\limits_{n=1}^{\infty}\|
 g_n\|_{\velq(\nu)}\|h_n\|_{\velp(\mu)}<\infty.
\end{align*}
Therefore, $k(x,x)\in L^1(\mu)$, $k(x,x)$ is finite for a.e. $x$ and 
$$\sum\limits_{n=1}^{\infty} \left< g_n,h_n \right>=\int\limits_{\Omega}\sum\limits_{n=1}^{\infty}g_n(x)h_n(x)d\mu,$$
completing the proof.
\end{proof}

\section{Nuclearity on $\vel$ of operators on the torus }
\label{SEC:r-nuc-plapl}

In practice the application of the concept of nuclearity requires an underlying discrete analysis. A source of problems where this situation arises  in a natural way is the analysis of operators on compact Lie groups due to the discreteness of the unitary dual. More generally, a discrete Fourier analysis can be associated to a compact manifold as well as a notion of global symbol as developed in \cite{dr14a:fsymbsch}, \cite{dr:suffkernel}. 

In this section we apply the concept of $r$-nuclearity on variable exponent Lebesgue spaces to the study of periodic operators
on $\Rn$ which we can realise as
operators on the torus $\mathbb{T}^n$, and we point out that all the results in this section have suitable extensions to the setting of compact Lie groups. Recent results on the nuclearity
 on Lebesgue spaces on compact Lie groups and Grothendieck-Lidskii formulae have been obtained in \cite{dr13a:nuclp}. The trace formulas that we establish here are expressed in terms of global toroidal symbols.  
 We first recall some notations and definitions for the Fourier analysis on the torus and the toroidal quantization. The toroidal quantization has been analysed extensively in \cite{Ruzhansky-Turunen-JFAA-torus} 
and \cite{RT:nfao, rt:book}, following the initial analysis in \cite{Ruzhansky-Turunen:torus-OT-2007}. 
\medskip

We denote the $n$-dimensional torus by $\Tn=\Rn/ \Zn$. Its unitary dual can be 
described as $\widehat{\Tn}\simeq \Zn$, and the collection $\{ \xi_k(x)=e^{2\pi i x\cdot k}\}_{k\in\Zn}$ 
is an orthonormal basis of $L^2(\Tn)$. We will use the notation $\langle \xi\rangle :=1+|\xi|$, where $|\cdot|$ denotes the euclidean norm.

%\begin{defn}{\bf(Periodic functions).} A function $f:\ern\rightarrow\ce$ is $2\pi$-{\em periodic} if $f(x+k)=f(x)$ for every $x\in\ern $  
% and $k\in 2\pi\zn $. We shall identified these functions with functions defined on $\tn={\ern}/{ 2\pi\zn}=\{x+\ 2\pi\zn :x\in\ern\}.$ 
% The space of $2\pi$- periodic $m$ times continuously differentiable functions is denoted by $C^m(\tn)$. The test functions are the elements of the space $C^{\infty}(\tn)=\cap_{m} C^m(\tn)$.
%\end{defn} 

\begin{defn} 
Let us denote by $\mathcal{S}(\zn)$ the space of {\em rapidly decaying} functions $\phi:\zn\rightarrow\mathbb{C}$. That is,
$\varphi\in \mathcal{S}(\zn) $ if for any $M>0$ there exists a constant $C_{\varphi,M}$ such that
$$|\varphi (\xi)|\leq C_{\varphi,M}\langle \xi\rangle^{-M}$$
 holds for all $\xi\in\zn$. The topology on $\mathcal{S}(\zn)$ is defined by the seminorms $p_k$, where $k\in\mathbb{N}_0$ and $p_k(\varphi)=\sup\limits_{\xi\in\zn}\langle \xi\rangle^{k}|\varphi(\xi)|. $
\end{defn}

In order to define the class of symbols that we will use, let us recall the definition of the Fourier transform on the torus for a function $f$ in $C^{\infty}(\tn)$ given by
\[(\efet f)(\xi)=\widehat{f}(\xi)=\int_{\tn}e^{-2\pi ix\cdot \xi}f(x)dx.\]
One can prove that 
\[\efet: C^{\infty}(\tn)\rightarrow \mathcal{S}(\zn)\]
is a continuous bijection. The reconstruction formula of $f$ in the form of a discrete integral or sum over the dual group $\zn$
is the Fourier series
\[f(x)=\sum_{\xi\in\zn }e^{2\pi ix\cdot \xi}(\efet f)(\xi).\]

A corresponding operator is associated to a symbol $\sigma(x,\xi)$ which will be called a periodic pseudo-differential operator or the operator given by the toroidal quantization: 
\beq  T_{\sigma}f(x)=\sum\limits_{\xi\in\zn}
  e^{2\pi i x\cdot\xi}\sigma(x,\xi)(\efet f)(\xi) ,\eq
 which can also be written as 
\beq  \label{EQ:Tq}
T_{\sigma}f(x)=\sum\limits_{\xi\in\zn}\int\limits_{\tn}
  e^{2\pi i(x-y)\cdot\xi}\sigma(x,\xi)f(y)dy.
 \eq
  
We refer to \cite{Ruzhansky-Turunen-JFAA-torus} for an extensive analysis of such toroidal quantization.

%The toroidal quantization 
%\begin{equation}\label{EQ:A-torus}
%Tf(x)=\sum\limits_{\xi\in\Zn} e^{2\pi i x\cdot \xi} \sigma_A(x,\xi)\widehat{f}(\xi)
%\end{equation}

In the rest of this section we will consider $\tn$ endowed with the Borel $\sigma$-algebra and the Lebesgue measure so that we will just write $\mathcal{P}(\tn)$ 
 to denote the corresponding class of variable exponents. 

\begin{thm} 
Let $p(\cdot)\in\mathcal{P}(\tn)$ and $0<r\leq 1$. Let $\sigma(x,\xi)$ be a symbol such that
\[
\sum\limits_{\xi\in\zn}\|\sigma(\cdot,\xi)\|_{\velp}^r<\infty.
\]
Then $T_{\sigma}$ is $r$-nuclear from $\vel$ to $\velq$ for all $q(\cdot)\in\mathcal{P}(\tn)$. If $p^+<\infty$ and $q(\cdot)=p(\cdot)$, then $T_{\sigma}$ is $r$-nuclear on $\vel(\tn)$ and   
\beq \label{fftra}
\Tr(T_{\sigma})=\int_{\tn}\sum\limits_{\xi\in\zn} \sigma(x,\xi)dx.
\eq
In particular, if additionally $r\leq \frac 23$, then    
\beq \label{fftra2}
\Tr(T_{\sigma})=\int_{\tn}\sum\limits_{\xi\in\zn} \sigma(x,\xi)dx=\sum\limits_{j=1}^{\infty}\lambda_j,
\eq
where $\lambda_j\,\, (j=1,2,\dots)$ are the eigenvalues of $T_{\sigma}$ on $\vel(\tn)$ with multiplicities taken into account.
\end{thm}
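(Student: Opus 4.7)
The plan is to read off an explicit $r$-nuclear representation of $T_\sigma$ directly from the toroidal quantization formula \eqref{EQ:Tq} and then apply the characterisation in Theorem \ref{ch2}. Concretely, I would rewrite
\[T_\sigma f(x)=\int_{\tn}\Bigl(\sum_{\xi\in\zn} g_\xi(x)\,h_\xi(y)\Bigr) f(y)\,dy,\]
with the natural choice $g_\xi(x):=e^{2\pi i x\cdot\xi}\sigma(x,\xi)$ for the range-side functions in $L^{q(\cdot)}(\tn)$ and $h_\xi(y):=e^{-2\pi i y\cdot\xi}$ for the dual-side functions in $L^{p'(\cdot)}(\tn)$.

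Next I would check the summability condition needed in Theorem \ref{ch2}. Because $\tn$ has finite Lebesgue measure, $\|h_\xi\|_{L^{p'(\cdot)}(\tn)}=\|1\|_{L^{p'(\cdot)}(\tn)}$ is a finite constant independent of $\xi$, while $|g_\xi|=|\sigma(\cdot,\xi)|$ yields $\|g_\xi\|_{L^{q(\cdot)}(\tn)}=\|\sigma(\cdot,\xi)\|_{L^{q(\cdot)}(\tn)}$. Invoking the continuous embeddings between variable exponent Lebesgue spaces on a finite measure space to pass from the $L^{q(\cdot)}$- to the $L^{p'(\cdot)}$-norm, the hypothesis forces $\sum_{\xi}\|g_\xi\|_{L^{q(\cdot)}}^{r}\|h_\xi\|_{L^{p'(\cdot)}}^{r}<\infty$, and Theorem \ref{ch2} then delivers $r$-nuclearity of $T_\sigma\colon L^{p(\cdot)}(\tn)\to L^{q(\cdot)}(\tn)$.

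For the trace formula, specialising to $q(\cdot)=p(\cdot)$ with $p^+<\infty$ ensures via Theorem \ref{app12} that $L^{p(\cdot)}(\tn)$ has the bounded approximation property, so the trace in \eqref{EQ:Trace} is a well-defined Banach-space invariant. Feeding the above decomposition into the trace identity of Theorem \ref{ch2} gives
\[\Tr(T_\sigma)=\int_{\tn}\sum_{\xi\in\zn} g_\xi(x)\,h_\xi(x)\,dx=\int_{\tn}\sum_{\xi\in\zn}\sigma(x,\xi)\,dx,\]
because the two exponentials cancel on the diagonal $y=x$, yielding \eqref{fftra}. The additional identity \eqref{fftra2} then follows at once from Grothendieck's Lidskii theorem \eqref{lia1}: when $r\leq 2/3$ the operator is automatically $2/3$-nuclear, and the (bounded) approximation property of $L^{p(\cdot)}(\tn)$ allows one to conclude that $\Tr(T_\sigma)$ equals the sum of the eigenvalues of $T_\sigma$ counted with multiplicity.

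The main obstacle I foresee is not deep but is the careful bookkeeping at two points: first, verifying rigorously that the formal kernel expansion $k(x,y)=\sum_\xi g_\xi(x)h_\xi(y)$ genuinely realises $T_\sigma$ in the pointwise integral form required by Theorem \ref{ch2} — so that the interchange of $\xi$-summation and $y$-integration is legitimate for a.e.\ $x$ — and second, pinning down precisely the embedding between variable exponent Lebesgue spaces on $\tn$ that converts the hypothesised $L^{p'(\cdot)}$-summability of $\sigma(\cdot,\xi)$ into the $L^{q(\cdot)}$-summability actually appearing in the $r$-nuclear representation.
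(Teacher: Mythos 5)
Your overall strategy is the same as the paper's: the same factorisation $g_\xi(x)=e^{2\pi i x\cdot\xi}\sigma(x,\xi)$, $h_\xi(y)=e^{-2\pi i y\cdot\xi}$ of the kernel, an appeal to Theorem \ref{ch2} for both the $r$-nuclearity and the trace identity (which also disposes of your first ``bookkeeping'' worry, since the interchange of the $\xi$-sum and the $y$-integration is exactly what Lemma \ref{l1ab}(d), built into Theorem \ref{ch2}, provides), and Grothendieck's theorem \eqref{lia1} for $r\le\frac23$. The genuine gap is in your summability step. You need $\sum_\xi\|g_\xi\|_{\velq}^r\|h_\xi\|_{\velp}^r<\infty$ and you propose to deduce $\|\sigma(\cdot,\xi)\|_{\velq}\lesssim\|\sigma(\cdot,\xi)\|_{\velp}$ from ``continuous embeddings between variable exponent Lebesgue spaces on a finite measure space''. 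That inclusion goes the wrong way in general: on a finite measure space one has $L^{a(\cdot)}\hookrightarrow L^{b(\cdot)}$ only when $b(x)\le a(x)$ a.e.\ (cf.\ \cite[Theorem 3.3.1]{di:book}), so passing from the $\velp$-norm to the $\velq$-norm requires $q(\cdot)\le p'(\cdot)$ a.e. Since the theorem allows arbitrary $q(\cdot)\in\mathcal{P}(\tn)$ --- and in the trace part takes $q(\cdot)=p(\cdot)$, which bears no pointwise relation to $p'(\cdot)$ unless $p\le 2$ --- the $\velq$-summability of $\sigma(\cdot,\xi)$ simply does not follow from the stated hypothesis, and this step of your argument fails.

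The paper's proof uses no embedding at all: it measures the exponential factor in $\velq$, namely $\|h_\xi\|_{\velq}=\|1\|_{\velq}<\infty$ because $\tn$ has finite measure, and measures the factor carrying the symbol in $\velp$, so that the summability it verifies is $\sum_\xi\|\sigma(\cdot,\xi)\|_{\velp}^r\,\|1\|_{\velq}^r<\infty$, which is literally the hypothesis; Theorem \ref{ch2} is then invoked with that pairing, and the trace formula follows from $g_\xi(x)h_\xi(x)=\sigma(x,\xi)$ exactly as you computed, with Theorem \ref{app12} guaranteeing that the trace is well defined when $p^+<\infty$. You correctly observed that the literal reading of Theorem \ref{ch2} (the $x$-factor in the target norm, the $y$-factor in the dual of the source) would call for the $\velq$-norm of $\sigma(\cdot,\xi)$; but that tension cannot be resolved by an inclusion of variable Lebesgue spaces valid ``for all $q$''. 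You should instead attach the norms to the factors as the paper does --- noting that the hypothesis, with the $\velp$-norm on $\sigma(\cdot,\xi)$, is precisely what that pairing requires --- rather than interpolate an embedding that is false whenever $q$ exceeds $p'$ on a set of positive measure.
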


\begin{proof} We observe that for a pseudo-differential operator $T_{\sigma}$ of the form \eqref{EQ:Tq} 
its kernel can be formally written in the form
\[k(x,y)=\sum\limits_{\xi\in\Zn} e^{2\pi i(x-y)\cdot \xi} \sigma(x,\xi).\]
We write $g_{\xi}(x)=e^{2\pi ix\cdot \xi} \sigma(x,\xi),\,\, h_{\xi}(y)=e^{-2\pi iy\cdot \xi}$. 
Now, $\|h_{\xi}(\cdot)\|_{\velq}=\|1\|_{\velq}<\infty$ since the measure is finite. Hence
\[\sum\limits_{\xi\in\zn}\|\sigma(\cdot,\xi)\|_{\velp}^r\|h_{\xi}(\cdot)\|_{\velq}^r=\|h_{\xi}(\cdot)\|_{\velq}^r\sum\limits_{\xi\in\zn}\|\sigma(\cdot,\xi)\|_{\velp}^r<\infty,\]
for all $q(\cdot)\in\mathcal{P}(\tn)$. An application of Theorem \ref{ch2} yields the $r$-nuclearity of $T_{\sigma}$ from $\vel$ to $\velq$ for all $q(\cdot)\in\mathcal{P}(\tn)$.  The formula \eqref{fftra} for the trace also follows from Theorem \ref{ch2} since $g_{\xi}(x)h_{\xi}(x)=\sigma(x,\xi).$
 The formula \eqref{fftra2} follows from  \eqref{fftra} and Grothendieck's Theorem.
\end{proof}
As an application, we will consider the composition of a multiplication operator with a multiplier
(an operator with symbol depending only on $\xi$) on the torus $\tn$.  Given a measurable function $\alpha$ on $\tn$,
we take the symbols $\alpha(x)$ and  $\sigma(\xi)$, the corresponding multiplication is the operator denoted by 
$\alpha T_{\sigma}$ given by $\alpha T_{\sigma}f=\alpha\sigma(D)f$ on $\tn$. 

\begin{cor} 
Let $p(\cdot)\in\mathcal{P}(\tn)$. Let $0<r\leq 1$, $\alpha\in\velp$, and let $\sigma(\xi)$ be a symbol such that
\[\sum\limits_{\xi\in\zn}|\sigma(\xi)|^r<\infty.\]
Then $\alpha T_{\sigma}$ is $r$-nuclear from $\vel$ to $\velq$ for all $q(\cdot)\in\mathcal{P}(\tn)$. 
If $p^+<\infty$ and $q(\cdot)=p(\cdot)$, then 
$\alpha T_{\sigma}$ is $r$-nuclear on $\vel(\tn)$ and   
\[
\Tr(\alpha T_{\sigma})=\int_{\tn}\alpha(x)dx\cdot\sum\limits_{\xi\in\zn} \sigma(\xi).
\]
If additionally  $r\leq \frac 23$, then 
\[
\Tr(\alpha T_{\sigma})=\int_{\tn}\alpha(x)dx\cdot\sum\limits_{\xi\in\zn} \sigma(\xi)=\sum\limits_{j=1}^{\infty}\lambda_j,
\]
where $\lambda_j\,\, (j=1,2,\dots)$ are the eigenvalues of $\alpha T_{\sigma}$ with multiplicities taken into account.
\end{cor}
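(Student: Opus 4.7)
The plan is to reduce this statement to the preceding theorem by identifying $\alpha T_{\sigma}$ as a periodic pseudo-differential operator with a suitable symbol, and then verifying the hypothesis in a direct, factorised way.

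First, I would observe that the composition $\alpha T_{\sigma}$ fits into the toroidal quantisation framework \eqref{EQ:Tq} with the symbol
\[
\widetilde{\sigma}(x,\xi) := \alpha(x)\,\sigma(\xi),
\]
since
\[
\alpha T_{\sigma}f(x) = \alpha(x)\sum_{\xi\in\zn} e^{2\pi i x\cdot\xi}\sigma(\xi)\,\widehat{f}(\xi)
= \sum_{\xi\in\zn} e^{2\pi i x\cdot\xi}\,\widetilde{\sigma}(x,\xi)\,\widehat{f}(\xi).
\]
Since $\|\widetilde{\sigma}(\cdot,\xi)\|_{\velp} = |\sigma(\xi)|\,\|\alpha\|_{\velp}$, the summability assumption in the previous theorem is immediate:
\[
\sum_{\xi\in\zn}\|\widetilde{\sigma}(\cdot,\xi)\|_{\velp}^{r}
= \|\alpha\|_{\velp}^{r}\sum_{\xi\in\zn}|\sigma(\xi)|^{r}<\infty.
\]

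Next, I would invoke the previous theorem directly. It yields the $r$-nuclearity of $\alpha T_{\sigma}$ from $\vel$ into $\velq$ for every $q(\cdot)\in\mathcal{P}(\tn)$, and, under the further hypotheses $p^{+}<\infty$ and $q(\cdot)=p(\cdot)$, both the $r$-nuclearity on $\vel(\tn)$ and the trace identity
\[
\Tr(\alpha T_{\sigma}) = \int_{\tn}\sum_{\xi\in\zn}\widetilde{\sigma}(x,\xi)\,dx
= \int_{\tn}\alpha(x)\Bigl(\sum_{\xi\in\zn}\sigma(\xi)\Bigr)dx
= \int_{\tn}\alpha(x)\,dx\cdot\sum_{\xi\in\zn}\sigma(\xi).
\]
The factorisation of the double sum/integral is harmless here because $\sum_{\xi}|\sigma(\xi)|\leq (\sum_{\xi}|\sigma(\xi)|^{r})^{1/r}<\infty$ (as $r\leq 1$), so $\sum_{\xi}\sigma(\xi)$ is an absolutely convergent constant which can be pulled out. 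Finally, in the case $r\leq \tfrac{2}{3}$, the Grothendieck--Lidskii identity stated as \eqref{lia1}, which holds on any Banach space enjoying the bounded approximation property supplied by Theorem \ref{app12}, gives the equality with $\sum_{j}\lambda_{j}$.

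I do not anticipate a substantive obstacle: the entire statement is a corollary obtained by specialising the symbol to the rank-one (in $x$--$\xi$ dependence) form $\alpha(x)\sigma(\xi)$. The only point deserving care is the verification of the summability hypothesis, which reduces to the factorisation above, and the justification of interchanging the sum over $\xi$ with the integral over $\tn$ in the trace formula, which is secured by absolute convergence as noted.
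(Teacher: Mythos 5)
Your proposal is correct and follows essentially the same route as the paper: identify the symbol of $\alpha T_{\sigma}$ as $\alpha(x)\sigma(\xi)$, note that $\|\alpha(\cdot)\sigma(\xi)\|_{\velp}^r=\|\alpha\|_{\velp}^r|\sigma(\xi)|^r$ so the summability hypothesis factors out, and then apply the preceding theorem (equivalently Theorem \ref{ch2}) together with Grothendieck's theorem for the $r\leq\frac23$ case. No issues.
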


\begin{proof} Note that $\|\alpha(\cdot)\sigma(\xi)\|_{\velp}^r =\|\alpha\|_{\velp}^r|\sigma(\xi)|^r$, hence
\[\sum\limits_{\xi\in\zn}\|\alpha(\cdot)\sigma(\xi)\|_{\velp}^r=\|\alpha\|_{\velp}^r\sum\limits_{\xi\in\zn}|\sigma(\xi)|^r<\infty.\]
An application of Theorem \ref{ch2} concludes the proof.
\end{proof}
In particular, let us consider the symbol
$\sigma(\xi)=(1+4\pi^2 |\xi|^2)^{-\frac{\tau}{2}}$ for $\tau>0$.
The corresponding multiplication yields the operator $\alpha T_\sigma f=\alpha(I-\Delta)^{-\frac{\tau}{2}}f$ on $\tn$. 
We observe that $\sum\limits_{\xi\in\zn}(1+4\pi^2 |\xi|^2)^{-\frac{r\tau}{2}}<\infty$ if and only if $r\tau>n$. 
Consequently we obtain:

\begin{cor} 
Let $p(\cdot)\in\mathcal{P}(\tn)$. If $0<r\leq 1$, $\alpha\in\velp$, and $r\tau>n$, then 
 $\alpha T_{\sigma}=\alpha(I-\Delta)^{-\frac{\tau}{2}}$ is $r$-nuclear from $\vel$ to $\velq$ for all 
 $q(\cdot)\in\mathcal{P}(\tn)$. 
If additionally $p^+<\infty$ and $q(\cdot)=p(\cdot)$, then $\alpha (I-\Delta)^{-\frac{\tau}{2}}$ is $r$-nuclear on 
$\vel(\tn)$ and   
\[
\Tr(\alpha (I-\Delta)^{-\frac{\tau}{2}})=\int_{\tn}\alpha(x)dx\cdot\sum\limits_{\xi\in\zn}(1+4\pi^2|\xi|^2)^{-\frac{\tau}{2}}.
\]
If additionally $r\leq \frac 23$, then   
\[
\Tr(\alpha (I-\Delta)^{-\frac{\tau}{2}})=\int_{\tn}\alpha(x)dx\cdot\sum\limits_{\xi\in\zn}(1+4\pi^2|\xi|^2)^{-\frac{\tau}{2}}
=\sum\limits_{j=1}^{\infty}\lambda_j,
\]
where $\lambda_j\,\, (j=1,2,\dots)$ are the eigenvalues of $\alpha (I-\Delta)^{-\frac{\tau}{2}}$ 
on $\vel(\tn)$ with multiplicities taken into account.
\end{cor}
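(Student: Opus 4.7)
The plan is to deduce this corollary directly from the previous corollary by specialising to the symbol $\sigma(\xi)=(1+4\pi^2|\xi|^2)^{-\tau/2}$, so essentially all the work reduces to verifying the summability hypothesis on $\sigma$.

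First I would check the hypothesis $\sum_{\xi\in\zn}|\sigma(\xi)|^{r}<\infty$ of the preceding corollary. With the given symbol this is $\sum_{\xi\in\zn}(1+4\pi^2|\xi|^2)^{-r\tau/2}$, which by comparison with the integral $\int_{\Rn}(1+4\pi^2|x|^2)^{-r\tau/2}\,dx$ (split into unit cubes centred at lattice points, say) converges if and only if $r\tau>n$. This is exactly the condition in the hypothesis. Since $\alpha\in\velp$ is also assumed, the previous corollary applies and yields $r$-nuclearity of $\alpha T_\sigma=\alpha(I-\Delta)^{-\tau/2}$ from $\vel$ into $\velq$ for every $q(\cdot)\in\mathcal{P}(\tn)$.

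If in addition $p^+<\infty$ and $q(\cdot)=p(\cdot)$, the previous corollary supplies both the $r$-nuclearity on $\vel(\tn)$ and the trace formula
\[
\Tr(\alpha(I-\Delta)^{-\tau/2})=\int_{\tn}\alpha(x)\,dx\cdot\sum\limits_{\xi\in\zn}(1+4\pi^2|\xi|^2)^{-\tau/2},
\]
with no extra work. For the Lidskii-type identity I would then invoke Grothendieck's theorem \eqref{lia1}, which requires $r\leq \tfrac23$; note that under the hypothesis $p^+<\infty$ the bounded approximation property of $\vel(\tn)$ (Theorem \ref{app12}) is available, so the trace in \eqref{EQ:Trace} is well defined and Grothendieck's theorem applies to identify the trace with $\sum_j\lambda_j$.

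I do not expect any genuine obstacle: the only non-trivial point is the lattice sum convergence criterion $r\tau>n$, which is standard, and everything else is a direct reading-off from the preceding corollary together with Grothendieck's theorem. The role of the assumption $p^+<\infty$ is only to guarantee the bounded approximation property and hence the well-definedness of the trace, exactly as in the previous corollary.
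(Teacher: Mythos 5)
Your proposal is correct and follows the paper's route exactly: the paper derives this corollary by noting that for $\sigma(\xi)=(1+4\pi^2|\xi|^2)^{-\tau/2}$ the summability hypothesis $\sum_{\xi\in\zn}|\sigma(\xi)|^r<\infty$ of the preceding corollary holds if and only if $r\tau>n$, and then everything else (the $r$-nuclearity, the trace formula, and the Grothendieck--Lidskii identity for $r\leq\frac23$) is read off from that corollary. Your integral-comparison justification of the lattice-sum criterion is just a slightly more explicit version of the paper's one-line observation.
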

\noindent{\bf{Acknowledgements}}\\

\noindent The authors express their gratitude  to the anonymous referee who has pointed out several corrections and valuable suggestions leading to 
 a substantial improvement of the manuscript.

%\bibliographystyle{alphaabbr}

%\bibliography{bib-Delgado-lpx}

\end{document}